\newcommand{\Q}{\mathbb{Q}}
\newcommand{\C}{\mathbb C}
\newcommand{\R}{\mathbb{R}}
\newcommand{\Z}{\mathbb{Z}}
\newcommand{\bQ}{\overline{\mathbb Q}}
\newcommand{\F}{\mathcal{F}}
\renewcommand{\P}{{\mathbb{P}}}
\newcommand{\Clf}{\mathscr{C}}
\newcommand{\bfxi}{{\boldsymbol{\xi}}}
\newcommand{\bfalpha}{{\boldsymbol{\alpha}}}
\newcommand{\bfz}{{\boldsymbol{z}}}
\newcommand{\bftheta}{{\boldsymbol{\theta}}}
\newcommand{\bfu}{{\boldsymbol{u}}}
\newcommand{\bfn}{{\boldsymbol{n}}}
\newcommand{\bfp}{{\boldsymbol{p}}}
\newcommand{\bfzero}{{\boldsymbol{0}}}
\newcommand{\bfW}{\boldsymbol{W}}
\newcommand{\wF}{F_0}
\newcommand{\lambdas}{\lambda_{S^1}}
\newcommand{\lambdasn}{\lambda_{(S^1)^N}}
\def\wh#1{{\widehat{#1}}}
\newcommand{\supp}{\operatorname{supp}}
\renewcommand{\L}{\operatorname{L}}
\newcommand{\gal}{\operatorname{Gal}}
\newcommand{\D}{\mathscr{D}}
\renewcommand{\d}{\operatorname{d}}
\newcommand{\dch}{\operatorname{d_{ch}}}
\newcommand{\dsph}{\operatorname{d_{sph}}}
\newcommand{\h}{\operatorname{h}}
\newcommand{\lip}{\operatorname{Lip}}
\newcommand{\lipsph}{\operatorname{Lip_{sph}}}
\newcounter{thm}
\numberwithin{equation}{section}
\numberwithin{thm}{section}
\theoremstyle{definition}
\newtheorem{rem}[thm]{Remark}
\theoremstyle{plain}
\newtheorem{lemma}[thm]{Lemma}
\newtheorem{prop}[thm]{Proposition}
\newtheorem{teor}[thm]{Theorem}
\begin{document}

\title[Quantitative equidistribution of Galois orbits]{Quantitative equidistribution of Galois orbits of small points in the $N$-dimensional torus}

\author[D'Andrea]{Carlos D'Andrea}
\address{Departament de Matem\`atiques i Inform\`atica, Universitat de Barcelona.
Gran Via~585, 08007 Barcelona, Spain}
\email{cdandrea@ub.edu}
\urladdr{\url{http://atlas.mat.ub.es/personals/dandrea/}}

\author[Narváez-Clauss]{Marta Narv\'aez-Clauss}
\address{Departament de Matem\`atiques i Inform\`atica, Universitat de Barcelona.
Gran Via~585, 08007 Barcelona, Spain}
\email{marta.narvaez@ub.edu}

\author[Sombra]{Mart{\'\i}n~Sombra}
\address{Departament de Matem\`atiques i Inform\`atica, Universitat de Barcelona.
Gran Via~585, 08007 Barcelona, Spain \vspace*{-2.5mm}}
\address{ICREA.
Passeig Llu\'is Companys 23, 08010 Barcelona, Spain}
\email{sombra@ub.edu}
\urladdr{\url{http://atlas.mat.ub.es/personals/sombra/}}

\date{\today} \subjclass[2010]{Primary 11G50; Secondary 11K38, 43A25.}
\keywords{Height of points, algebraic torus, equidistribution of Galois orbits.}

\thanks{D'Andrea and Narv\'aez-Clauss were partially supported by the MICINN
  research project MTM2010-20279-C02-01 and the MINECO research project MTM2013-40775-P. Sombra was partially supported by the MINECO 
  research projects MTM2012-38122-C03-02 and MTM2015-65361-P}

\begin{abstract}
We present a quantitative version of Bilu's theorem on the limit distribution of Galois orbits of sequences of points of small height in the $N$-dimensional algebraic torus. Our result gives, for a given point, an explicit bound for the discrepancy between its Galois orbit and the uniform distribution on the compact subtorus, in terms of the height and the generalized degree of the point.
\end{abstract}
\maketitle

\section{Introduction} \label{sec:introduction}
One of the first results concerning the distribution of Galois orbits of points of small height in algebraic varieties is due to Bilu \cite{bilu}. It establishes that the Galois orbits of strict sequences of points of small Weil height in an algebraic torus tend to the uniform distribution around the unit polycircle. 

Let us introduce some notation before giving the precise formulation of this result. Fix an algebraic closure $\bQ$ of $\Q$ together with an embedding $\bQ\hookrightarrow\C$. By $\C^\times$ and $\bQ^\times$ we denote the multiplicative groups of $\C$ and $\bQ$, respectively. Let $N\geq1$, the {\em Galois orbit} of a point in $(\bQ^\times)^N$ is its orbit under the action of the {\em absolute Galois group}, $\gal(\bQ/\Q)$.

For a finite set $T\subset(\C^\times)^N$, the discrete probability measure on $(\C^\times)^N$ associated to it is given by
$$\mu_T=\frac1{\#T}\sum_{\alpha\in T}\delta_\alpha,$$
where $\#T$ denotes the cardinality of $T$ and $\delta_\alpha$ the Dirac delta measure on $(\C^\times)^N$ supported on $\alpha$. The {\em unit polycircle} $(S^1)^N$ is the set of points $(z_1,\ldots,z_n)\in\C^N$ such that $|z_1|=\ldots=|z_N|=1$. It is a compact subgroup of $(\C^\times)^N$. We denote by $\lambdasn$ the Haar probability measure of $(S^1)^N$, considered as a measure on $(\C^\times)^N$.

A sequence $(\mu_k)_{k\geq1}$ of probability measures on $(\C^\times)^N$ {\em converges weakly} to a pro-bability measure $\mu$ on $(\C^\times)^N$ if, for every compactly supported continuous function $F:(\C^\times)^N\rightarrow\R$, we have
\begin{equation*}
\lim_{k\to\infty}\int_{(\C^\times)^N}Fd\mu_k=\int_{(\C^\times)^N}Fd\mu.
\end{equation*}

Let $\xi\in\bQ^\times$ and $f_\xi\in\Z[x]$ be the minimal polynomial of $\xi$ over the integers. The {\em Weil height} of $\xi$ is defined as
$$\h(\xi)=\frac{m(f_\xi)}{\deg(\xi)},$$
where $m(f_\xi)$ is the {\em (logarithmic) Mahler measure} of $f_\xi$, given by
$$m(f_\xi)=\frac1{2\pi}\int_0^{2\pi}\log|f_{\xi}(e^{i\theta})|d\theta,$$
and $\deg(\xi)=[\Q(\xi):\Q]$ is the degree of the point $\xi$.

This notion of height extends to $(\bQ^\times)^N$ as follows:
\begin{equation}\label{defh}\h(\bfxi)=\h(\xi_1)+\ldots+\h(\xi_N),\text{ for every }\bfxi=(\xi_1,\ldots,\xi_N)\in(\bQ^\times)^N.\end{equation}

A sequence $(\bfxi_k)_{k\geq1}$ in $(\bQ^\times)^N$ is {\em strict} if, for every proper algebraic subgroup $Y\subset(\bQ^\times)^N$, the cardinality of the set $\{k:\bfxi_k\in Y\}$ is finite.

\begin{teor}\cite[Theorem 1.1]{bilu}\label{teorbilu}
Let $(\bfxi_k)_{k\geq1}$ be a strict sequence in $(\bQ^\times)^N$ such that $\underset{k\to\infty}{\lim}\h(\bfxi_k)=0$. Then we have
$$\lim_{k\to\infty}\mu_{S_k}=\lambdasn,$$
where $\mu_{S_k}$ is the discrete probability measure associated to the Galois orbit $S_k$ of $\bfxi_k$.
\end{teor}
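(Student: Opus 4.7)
The plan is to show that every weak subsequential limit of $(\mu_{S_k})$ coincides with $\lambdasn$. Viewing the measures on the compactification $(\P^1(\C))^N$, weak sequential compactness yields a convergent subsequence, still denoted $(\mu_{S_k})$, with limit a Borel probability measure $\mu$. I would then proceed in two main steps: locate the support of $\mu$, and identify $\mu$ through its Fourier coefficients.

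To show $\supp\mu\subseteq(S^1)^N$, fix $i\in\{1,\ldots,N\}$ and write $\xi_{i,k}$ for the $i$-th coordinate of $\bfxi_k$, with minimal polynomial $f_{\xi_{i,k}}\in\Z[x]$ of degree $d_{i,k}$. Jensen's formula gives
\[
m(f_{\xi_{i,k}})=\log|a_{d_{i,k}}|+\sum_{\alpha}\log^+|\alpha|,
\]
where $a_{d_{i,k}}$ is the leading coefficient and $\alpha$ runs over the roots of $f_{\xi_{i,k}}$. Since $|a_{d_{i,k}}|\geq 1$, dividing by $d_{i,k}$ yields $\frac{1}{d_{i,k}}\sum_\alpha\log^+|\alpha|\leq\h(\xi_{i,k})\leq\h(\bfxi_k)\to 0$. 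A symmetric calculation applied to the reciprocal polynomial, combined with $\h(\xi^{-1})=\h(\xi)$, yields $\frac{1}{d_{i,k}}\sum_\alpha\log^-|\alpha|\to 0$. Consequently $\int\bigl|\log|z_i|\bigr|\,d\mu_{S_k}\to 0$ for each $i$, which forces $\mu$ to be concentrated on the polycircle $(S^1)^N$.

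To identify $\mu$ with $\lambdasn$, I would use that the characters of the compact abelian group $(S^1)^N$ are the monomials $\chi_\bfa(\bfz)=\prod_iz_i^{a_i}$ for $\bfa=(a_1,\ldots,a_N)\in\Z^N$, so by Fourier analysis it suffices to show $\int\chi_\bfa\,d\mu=0$ for every $\bfa\neq\bfzero$. Fix such $\bfa$ and set $\eta_k=\chi_\bfa(\bfxi_k)\in\bQ^\times$. From $\h(\eta_k)\leq(\max_i|a_i|)\,\h(\bfxi_k)$ we get $\h(\eta_k)\to 0$, and the strictness of $(\bfxi_k)$ passes to $(\eta_k)$: for every $n\geq 1$, the condition $\eta_k^n=1$ is equivalent to $\bfxi_k$ lying in $\ker(\chi_\bfa^n)$, a proper algebraic subgroup of $(\bQ^\times)^N$, so it holds for only finitely many $k$. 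The pushforward $(\chi_\bfa)_*\mu_{S_k}$ is the uniform probability measure on the Galois orbit of $\eta_k$ in $\C^\times$, so the one-dimensional case of the theorem applied to $(\eta_k)$ gives $(\chi_\bfa)_*\mu_{S_k}\to\lambdas$, hence $\int\chi_\bfa\,d\mu_{S_k}=\int z\,d(\chi_\bfa)_*\mu_{S_k}\to 0$. A standard cutoff argument, justified by the support concentration from the previous step to trim the pole locus of $\chi_\bfa$ inside $(\P^1(\C))^N$, then yields $\int\chi_\bfa\,d\mu_{S_k}\to\int\chi_\bfa\,d\mu$, and hence $\int\chi_\bfa\,d\mu=0$.

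The main obstacle is the one-dimensional case, which I would handle via potential theory. Writing $I(\nu)=\iint_{z\neq w}-\log|z-w|\,d\nu(z)\,d\nu(w)$ for the logarithmic energy, the quantity $I(\mu_{S_k})$ can be bounded above in terms of $\h(\xi_k)$ plus the correction $\frac{1}{d_k^2}\log|\mathrm{disc}(f_{\xi_k})|$; since the discriminant is a nonzero rational integer this correction is nonnegative, forcing $\limsup_kI(\mu_{S_k})\leq 0$. Combined with the support on $S^1$ and the fact that $\lambdas$ is the unique probability measure on $S^1$ attaining the minimum logarithmic energy $0$, every weak limit must equal $\lambdas$, completing the reduction.
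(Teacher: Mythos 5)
Your argument is a correct reconstruction of Bilu's original proof and takes a genuinely different route from the paper. You argue qualitatively: compactify in $\P^1(\C)^N$, extract a subsequential weak limit $\mu$, show by Jensen's formula that $\mu$ is carried by $(S^1)^N$, and identify $\mu$ with $\lambdasn$ by showing every nontrivial Fourier coefficient vanishes, via the pushforward under each character $\chi^\bfa$ (strictness and small height pass to $\chi^\bfa(\bfxi_k)$) together with the one-dimensional case, which you in turn handle by a logarithmic-energy bound coming from the integrality of the discriminant. The paper, by contrast, obtains Theorem~\ref{teorbilu} as a corollary of the quantitative Theorem~\ref{maintheor}, using only Lemma~\ref{strictseqDinfty} and the density of $\F$ among compactly supported continuous functions on $(\C^\times)^N$; the one-dimensional potential theory is outsourced to Favre and Rivera-Letelier's quantitative result (Theorem~\ref{thmFRLthesisM}). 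Your route trades explicit rates for a shorter, self-contained qualitative derivation. One technical caution in your last step: since $\chi^\bfa$ is unbounded near the boundary divisors of $\P^1(\C)^N$, the cutoff should be applied to the bounded function $\chi^\bfa/|\chi^\bfa|$, which agrees with $\chi^\bfa$ on $(S^1)^N$; combined with your mass-concentration estimate this yields $\int\chi^\bfa\,d\mu_{S_k}\to\int\chi^\bfa\,d\mu$, whereas truncating $\chi^\bfa$ itself leaves an uncontrolled unbounded tail. This is precisely the role played by the truncation $f_\delta$ in the paper's Proposition~\ref{propnus}.
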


This result was inspired on a previous work of Szpiro, Ullmo and Zhang \cite{SUZ} on the equidistribution of points of small N\'eron-Tate height in Abelian varieties. It was originally motivated by Bogomolov's conjecture, solved in \cite{Ullmo} and \cite{zhang}. The results of Szpiro, Ullmo and Zhang and of Bilu were largely generalized to other heights and places \cite{Rumely,BaHsia,FRL,Ba-Rum,Ch-L06,Yuan,Gubler,BermBouk,Chen,BPRS}. In particular, these results established the equidistribution of Galois orbits of sequences of small points for all places of $\Q$ and heights associated to algebraic dynamical systems. Moreover, this equidistribution phenomenon holds for the bigger set of test functions with logarithmic singularities along divisors with minimal height, see \cite{ct}.

As a general fact, these equidistribution theorems are formulated in a qualitative way, in the sense that no information is provided on the rate of convergence towards the equidistribution. An exception is \cite{FRL}, where a bound for this rate of convergence is given for a large class of heights of points in the projective line and all places of $\Q$. Independently, Petsche \cite{Petsche} gave a quantitative version of Bilu's result for the case of dimension one.

In this paper, we present a quantitative version of Theorem \ref{teorbilu} for the general $N$-dimensional case. In particular, we provide a bound for the integral of a suitable test function with respect to the signed measure defined by the difference of the discrete probability measure associated to the Galois orbit of a point in $(\bQ^\times)^N$ and the measure $\lambdasn$. This bound is given in terms of the height of the point, a higher dimensional generalization of the notion of the degree of an algebraic number, and a constant depending only on the test function. 

To state our main result properly, let us introduce further definitions and notations. For every $\bfn=(n_1,\ldots,n_N)\in\Z^N$, consider the monomial map 
$$\begin{array}{ccl}
\chi^\bfn:(\bQ^\times)^N &\longrightarrow & \bQ^\times \\ 
\bfz=(z_1,\ldots,z_N)&\longmapsto & \chi^\bfn(\bfz)=z_1^{n_1}\ldots z_N^{n_N}.
\end{array}$$
We define the {\em generalized degree} of a point $\bfxi\in(\bQ^\times)^N$ by
\begin{equation}\label{defD}\D(\bfxi)=\min_{\bfn\neq\bfzero}\{\|\bfn\|_1\deg(\chi^\bfn(\bfxi))\},\end{equation}
where $\deg(\chi^\bfn(\bfxi))$ is the degree of the point $\chi^\bfn(\bfxi)\in\bQ^\times$ and $\|\cdot\|_1$ is the $1$-norm on $\C^N$. For a particular choice of $\bfxi$, the generalized degree can be computed with a finite number of operations (Remark \ref{remgendeg}).

Let us identify $(\R/\Z)^N\times\R^N$ and $(\C^\times)^N$ via the logarithmic-polar coordinates change of variables:
\begin{equation*}
\begin{array}{ccc}
 (\R/\Z)^N\times\R^N & \longrightarrow & (\C^\times)^N  \\
 (\bftheta,\bfu)=((\theta_1,\ldots,\theta_N),(u_1,\ldots,u_N)) & \longmapsto & (e^{2\pi i\theta_1+u_1},\ldots, e^{2\pi i\theta_N+u_N}).
\end{array}
\end{equation*}
On $(\R/\Z)^N\times\R^N\simeq(\C^\times)^N$ we consider the translation invariant distance, defined as
\begin{equation*}
\d((\bftheta,\bfu),(\bftheta',\bfu'))=\left(\sum_{l=1}^N\d_{\rm ang}(\theta_l,\theta_l')^2+|u_l-u_l'|^2\right)^{\frac12},
\end{equation*}
where $\d_{\rm ang}(\theta_l,\theta_l')$ is the Euclidean distance in $S^1$ between $e^{2\pi i\theta_l}$ and $e^{2\pi i\theta_l'}$, divided by~$2\pi$.

A function $F:(\C^\times)^N\rightarrow\R$ belongs to the set of test functions $\F$ if it satisfies:
\begin{enumerate}[(i)]
\item $F$ is a Lipschitz function with respect to the distance $\d$;
\item The restriction $\wF=F|_{(S^1)^N}$ is in $\Clf^{N+1}((S^1)^N,\R)$.
\end{enumerate} 

The set $\F$ contains all compactly supported functions in $\Clf^{N+1}((\C^\times)^N,\R)$.

The following is the main result of this paper.

\begin{teor}\label{maintheor}
There is a constant $C\leq64$ such that, for every $\bfxi\in(\bQ^\times)^N$ with $\h(\bfxi)\leq1$ and every $F\in\F$,
\begin{equation*}
\left|\int_{(\C^\times)^N} Fd\mu_S-\int_{(\C^\times)^N} Fd\lambdasn\right|\leq c(F)\left(4\h(\bfxi)+C\frac{\log(\D(\bfxi)+1)}{\D(\bfxi)}\right)^{\frac{1}{2}},
\end{equation*}
where $S$ is the Galois orbit of $\bfxi$, $\mu_S$ the discrete probability measure associated to it and $c(F)$ a positive constant depending only on $F$.
\end{teor}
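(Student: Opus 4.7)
The plan is to reduce the $N$-dimensional estimate to the one-dimensional quantitative Bilu theorem of Petsche \cite{Petsche}, via a Fourier decomposition on the compact subtorus $(S^1)^N$ together with a Lipschitz correction that accounts for the part of $\bfxi$ lying off this torus. In logarithmic-polar coordinates, let $\pi\colon(\C^\times)^N\to(S^1)^N$ be the retraction $(\bftheta,\bfu)\mapsto(\bftheta,\bfzero)$ and split $F=(F-\wF\circ\pi)+\wF\circ\pi$. The first summand vanishes on $(S^1)^N$ and satisfies $|F(\alpha)-\wF(\pi(\alpha))|\leq \lip(F)\,\|\bfu(\alpha)\|$. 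The standard Mahler-measure identity $\frac{1}{\#S}\sum_{\alpha\in S}|\log|\alpha_l||\leq 2\h(\xi_l)$ then bounds its contribution by $O(\h(\bfxi))$, which is absorbed by the target $\h(\bfxi)^{1/2}$ term under the hypothesis $\h(\bfxi)\leq 1$.

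The main contribution is $\int\wF\circ\pi\,d\mu_S-\int\wF\,d\lambdasn$. Expanding $\wF$ in Fourier series on $(S^1)^N$, the $\Clf^{N+1}$-regularity gives coefficient decay $|\widehat{\wF}(\bfn)|\ll\|\bfn\|_\infty^{-(N+1)}$, and this quantity becomes
\[
\sum_{\bfn\neq\bfzero}\widehat{\wF}(\bfn)\cdot\frac{1}{\#S}\sum_{\alpha\in S}\chi^\bfn(\alpha/|\alpha|).
\]
For each fixed $\bfn\neq\bfzero$, the inner sum is the Galois average on $S^1$ associated with the algebraic number $\chi^\bfn(\bfxi)\in\bQ^\times$. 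Applying the one-dimensional quantitative Bilu (Petsche) to $\chi^\bfn(\bfxi)$ against a suitable Lipschitz extension of $z\mapsto z/|z|$, and using $\h(\chi^\bfn(\bfxi))\leq\|\bfn\|_1\h(\bfxi)$ together with $\deg(\chi^\bfn(\bfxi))\geq\D(\bfxi)/\|\bfn\|_1$ (from \eqref{defD}) and the monotonicity of $x\mapsto\log(x+1)/x$, yields
\[
\Bigl|\frac{1}{\#S}\sum_{\alpha\in S}\chi^\bfn(\alpha/|\alpha|)\Bigr|\leq C_1\,\|\bfn\|_1^{1/2}\Bigl(\h(\bfxi)+\frac{\log(\D(\bfxi)+1)}{\D(\bfxi)}\Bigr)^{1/2}.
\]
Summing against the Fourier coefficients, the series $\sum_{\bfn\neq\bfzero}\|\bfn\|_1^{1/2}\|\bfn\|_\infty^{-(N+1)}$ converges (the number of $\bfn$ with $\|\bfn\|_\infty=m$ is $O(m^{N-1})$), producing the stated estimate with $c(F)$ absorbing both the Lipschitz constant of $F$ and a $\Clf^{N+1}$-seminorm of $\wF$.

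The principal obstacle will be obtaining the explicit universal constant $C\leq 64$. This requires a carefully quantitative version of Petsche's one-dimensional bound applicable uniformly to the character $z\mapsto z/|z|$, which is not compactly supported on $\C^\times$; one may have to truncate away from the origin or re-derive Petsche's theorem in the form required here. Equally delicate is the bookkeeping in the monotonicity step bounding $\log(\deg(\chi^\bfn(\bfxi))+1)/\deg(\chi^\bfn(\bfxi))$ by $\|\bfn\|_1\log(\D(\bfxi)+1)/\D(\bfxi)$, and in the passage between $\|\cdot\|_1$ and $\|\cdot\|_\infty$ on $\Z^N$, so that the extra factor $\|\bfn\|_1^{1/2}$ is genuinely absorbed by the Fourier decay $\|\bfn\|_\infty^{-(N+1)}$ with explicit, quantitatively sharp constants.
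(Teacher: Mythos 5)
Your overall architecture matches the paper's: project $(\C^\times)^N\to(S^1)^N$ in logarithmic-polar coordinates, isolate the radial error by the Lipschitz property and Mahler-measure bound, expand the angular part in Fourier series, and reduce the $\bfn$-th coefficient to a one-dimensional equidistribution estimate for $\chi^\bfn(\bfxi)$. The decomposition into $(F-\wF\circ\pi)+\wF\circ\pi$, the inequality $\h(\chi^\bfn(\bfxi))\leq\|\bfn\|_1\h(\bfxi)$, the bound $\deg(\chi^\bfn(\bfxi))\geq\D(\bfxi)/\|\bfn\|_1$, and the use of monotonicity of $\log(x+1)/x$ are all the same moves the paper makes (in Propositions \ref{propsum1}, \ref{propnus}, \ref{propsum2}). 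Your Fourier-decay argument ($|\widehat{\wF}(\bfn)|\ll\|\bfn\|_\infty^{-(N+1)}$, then summing against $\|\bfn\|_1^{1/2}$) is a valid alternative to the paper's route via $\L^1$-integrability of the $\widehat{\partial\wF/\partial\theta_l}$ (Appendix \ref{appendixtestfun}); the paper slightly weakens $\|\bfn\|_1^{1/2}$ to $\|\bfn\|_1$ so that the sum becomes exactly $\frac1{2\pi}\sum_l\|\widehat{\partial\wF/\partial\theta_l}\|_{\L^1}$, which is why this seminorm appears in the bound on $c(F)$.

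However, the obstacle you flag at the end is not a detail to be filled in later: it is the substantive content of the proof, and your sketch does not close it. The unit character $z\mapsto z/|z|$ has neither a Lipschitz nor even a continuous extension to $\P^1(\C)$ near $0$ and $\infty$, so it cannot be fed directly into a one-dimensional equidistribution theorem for Lipschitz test functions. The paper introduces an explicit $\Clf^1$ truncation $f_\delta$ (Appendix \ref{appendixfdelta}) that agrees with $z/|z|$ on $\{\delta\leq|z|\leq1/\delta\}$ and vanishes outside $\{\delta/2\leq|z|\leq2/\delta\}$, controls both $\lipsph(u_\delta),\lipsph(v_\delta)\leq 2\sqrt2(\delta^2+9)/\delta^3$ and the error $|\widehat{\nu}_S(\bfn)-\frac1{\#S}\sum f_\delta(1:\chi^\bfn(\bfalpha))|\leq\frac{-2}{\log\delta}\|\bfn\|_1\h(\bfxi)$ via the counting Lemma \ref{countelementorbitbound}, and then optimizes over $\delta$ numerically at the very end. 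Without such a construction there is no single constant $C_1$ in your displayed inequality, and in particular no route to the announced $C\leq64$.

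There is also a choice-of-ingredient issue. You invoke Petsche's one-dimensional theorem, but the paper explicitly does \emph{not} use Petsche; it uses Favre–Rivera-Letelier's \cite{FRL} as made explicit in the thesis version (Theorem \ref{thmFRLthesisM}, with $C_0\leq15$ and $C\leq64$). This is not cosmetic: the explicit $C\leq 64$ in the statement is inherited from the constants in Theorem \ref{thmFRLthesisM}, and the $\frac12$ exponent in $\h(\bfxi)^{1/2}$ is the one in the FRL estimate. Petsche's bound is in a different form and with different constants, so swapping it in would not reproduce the stated theorem without reproving a quantitative one-dimensional input of the same shape. You would need either to cite Theorem \ref{thmFRLthesisM} (or \cite[Corollary 1.4]{FRL}) or to re-derive a comparable explicit estimate, not merely gesture at Petsche.

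Finally, one small inaccuracy in your sketch: after using $\deg(\chi^\bfn(\bfxi))\geq\D(\bfxi)/\|\bfn\|_1$, you need $\D(\bfxi)/\|\bfn\|_1\geq1$ before applying the monotonicity of $\log(x+1)/x$. The paper avoids this by keeping the combination $\|\bfn\|_1\deg(\chi^\bfn(\bfxi))\geq\D(\bfxi)\geq1$ together and only then invoking monotonicity; your version should be adjusted accordingly, as otherwise the step can fail for large $\|\bfn\|_1$.
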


For every test function $F\in\F$, the function $\wF$, its Fourier transform $\wh{\wF}$, all the first order partial derivatives of $\wF$ and their corresponding Fourier transforms are integrable with respect to a Haar measure (Theorem \ref{teorfun}). In logarithmic-polar coordinates $\wF(\bftheta)=F(\bftheta,\bfzero)$. Then, as shown in the proof of Theorem \ref{maintheor}, the constant $c(F)$ can be bounded by
\begin{equation*}
c(F)\leq2\lip(F)+16\sum_{l=1}^N\left\|\wh{\frac{\partial\wF}{\partial \theta_l}}\right\|_{\L^1},
\end{equation*}
where $\lip(F)$ is the Lipschitz constant of $F$ with respect to the distance $\d$ of $(\C^\times)^N$ and $\|\cdot\|_{\L^1}$ stands for the $\L^1$-norm of a function on the locally compact Abelian group $\Z^N$ with respect to the standard Haar measure.

Our main theorem is a quantitative version of Bilu's result. Indeed, if we consider a strict sequence $(\bfxi_k)_{k\geq1}$ in $(\bQ^\times)^N$ such that $\h(\bfxi_k)\to0$ as $k\to\infty$, we necessarily have that $\D(\bfxi_k)\to\infty$ as $k\to\infty$ (Lemma \ref{strictseqDinfty}). Hence, for every function $F\in\F$, Theorem \ref{maintheor} implies that
$$\lim_{k\to\infty}\int_{(\C^\times)^N}Fd\mu_{S_k}=\int_{(\C^\times)^N}Fd\lambdasn,$$
where $\mu_{S_k}$ is the discrete probability measure associated to the Galois orbit $S_k$ of $\bfxi_k$. Since $\F$ contains a dense subset of the set of compactly supported continuous functions on $(\C^\times)^N$, we deduce Theorem \ref{teorbilu}.

The rate of convergence in Theorem \ref{maintheor} has the expected
exponent $\frac12$ as in Favre and Rivera-Letelier's paper \cite{FRL},
see also Theorem \ref{thmFRLthesisM}.  On the other hand, one could
ask if, for the general $N$-dimensional case, the constant $c(F)$ might be bounded by the Lipschitz constant of
the test function, as in their paper.

The idea of the proof of our result is to reduce the problem, via monomial maps, to the one-dimensional situation as it was done in \cite{bilu,DAnGaSom}. In this setting, we apply Favre and Rivera-Letelier's result (Theorem \ref{thmFRLthesisM}). Then, we lift the obtained quantitative control to the $N$-dimensional torus by applying the Fourier inversion formula and a study of the Fourier-Stieltjes transform of the discrete probability measure associated to the orbit of the point. 

This paper is structured as follows. Section \ref{sec1} contains preliminary theory and general results on Fourier analysis, measures on the Riemann sphere, Galois invariant sets and the generalized degree. In Section \ref{sec2}, we give the proof of Theorem \ref{maintheor}, which is divided in several propositions and lemmas. At the end of the paper there are two appendices, the first one studies the set of test functions $\F$ and the second the Lipschitz constant of an auxiliary function used in Section \ref{sec2}.

\medskip

\noindent {\bf Acknowledgements.} We thank Joaquim Ortega, Juan Rivera-Letelier and the anonymous referee for useful comments and suggestions. The results of this paper are part of the Ph.D. thesis of the second author \cite{thesisM}.

\section{Preliminaries}\label{sec1}

\subsection{Fourier analysis} In this section we review basic concepts of Fourier analysis on $(\R/\Z)^N$, we refer the reader to \cite{rudin} for the proof of the stated results.

Let $p\geq1$. Given a function $H:(\R/\Z)^N\rightarrow\C$, its {\em $\L^p$-norm} is defined by
$$\|H\|_{\L^p}=\left(\int_{(\R/\Z)^N}|H(\bftheta)|^pd\bftheta\right)^{\frac1p}\in\R_{\geq0}\cup\{+\infty\}.$$
We say that $H\in\L^p((\R/\Z)^N)$ if this norm is finite. In particular, the function $H$ is {\em Haar-integrable} if it lies in $\L^1((\R/\Z)^N)$. Similarly, for a function $G:\Z^N\rightarrow\C$, its {\em $\L^p$-norm} is defined by
$$\|G\|_{\L^p}=\left(\sum_{\bfn\in\Z^N}|G(\bfn)|^p\right)^{\frac1p}\in\R_{\geq0}\cup\{+\infty\}$$
and we say that $G\in\L^p(\Z^N)$ if this norm is finite. Also, $G$ is {\em Haar-integrable} if it lies in $\L^1(\Z^N)$.

Let $H:(\R/\Z)^N\rightarrow\C$ be Haar-integrable, its Fourier transform is the function $\wh{H}:\Z^N\rightarrow\C$ defined as
$$\wh{H}(\bfn)=\int_{(\R/\Z)^N}H(\bftheta)e^{-2\pi i\bfn\cdot\bftheta}d\bftheta,$$
where 
$$\bfn\cdot\bftheta=(n_1,\ldots,n_N)\cdot(\theta_1,\ldots,\theta_N)=n_1\theta_1+\cdots+n_N\theta_N.$$
If, in addition, $\wh{H}$ is also Haar-integrable, the {\em Fourier inversion formula} states that 
$$H(\bftheta)=\sum_{\bfn\in\Z^N}\wh{H}(\bfn)e^{2\pi i\bfn\cdot\bftheta}.$$

For $H\in(\L^1\cap\L^2)((\R/\Z)^N)$, {\em Plancherel's theorem} states that $\wh{H}\in\L^2(\Z^N)$ and moreover the following holds
$$\|\wh{H}\|_{\L^2}=\|H\|_{\L^2}.$$

For every finite and regular positive measure $\lambda$ on $(\R/\Z)^N$, its {\em Fourier-Stieltjes transform} is the function $\wh{\lambda}:\Z^N\rightarrow\C$ given by
$$\wh{\lambda}(\bfn)=\int_{(\R/\Z)^N}e^{-2\pi i\bfn\cdot\bftheta}d\lambda(\bftheta).$$

We now establish some auxiliary results that will be useful for the proof of Theorem~\ref{maintheor}.

\begin{lemma}\label{lemmalambda}
Let $H:(\R/\Z)^N\longrightarrow\C$ be a Haar-integrable function such that its Fourier transform $\wh{H}$ is also Haar-integrable. For any finite regular measure $\lambda$ on $(\R/\Z)^N$ we have that $H$ is integrable with respect to $\lambda$ and $\wh{H}\wh{\lambda}$ is Haar-integrable. Moreover, the following holds
$$\int_{(\R/\Z)^N}Hd\lambda=\sum_{\bfn\in\Z^N}\wh{H}(\bfn)\overline{\wh{\lambda}(\bfn)}.$$
\end{lemma}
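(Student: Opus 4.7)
The plan is to use the Fourier inversion formula to expand $H$ as a trigonometric series and then integrate term by term against $\lambda$.

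First I would check that $\wh{H}\wh{\lambda}$ is Haar-integrable on $\Z^N$. Since $\lambda$ is a finite positive measure, for every $\bfn\in\Z^N$ the Fourier--Stieltjes coefficient satisfies $|\wh{\lambda}(\bfn)|\leq\lambda((\R/\Z)^N)<\infty$, so
$$\sum_{\bfn\in\Z^N}|\wh{H}(\bfn)\wh{\lambda}(\bfn)|\leq\lambda((\R/\Z)^N)\,\|\wh{H}\|_{\L^1}<\infty,$$
which handles that point.

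Next I would note that, since $\wh{H}\in\L^1(\Z^N)$, the Fourier inversion formula expresses $H$ almost everywhere as the uniformly convergent series
$$H(\bftheta)=\sum_{\bfn\in\Z^N}\wh{H}(\bfn)e^{2\pi i\bfn\cdot\bftheta}.$$
In particular $H$ agrees a.e.\ with a continuous function of sup-norm bounded by $\|\wh{H}\|_{\L^1}$, hence $H$ is bounded and $\lambda$-integrable because $\lambda$ is finite. (To justify integrating the a.e.\ representative against $\lambda$, I would invoke regularity of $\lambda$ together with the standard density argument; alternatively, since the difference between $H$ and the continuous representative is a Haar-null bounded function and we only need the identity to hold, we may as well replace $H$ by its continuous version without loss of generality.)

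The main computation then substitutes the Fourier series into the integral and swaps sum and integral. Since $\sum_{\bfn}|\wh{H}(\bfn)|<\infty$ and $|e^{2\pi i\bfn\cdot\bftheta}|=1$, the function $(\bfn,\bftheta)\mapsto\wh{H}(\bfn)e^{2\pi i\bfn\cdot\bftheta}$ is absolutely integrable with respect to the product of counting measure on $\Z^N$ and $\lambda$; Fubini's theorem then gives
$$\int_{(\R/\Z)^N}Hd\lambda=\sum_{\bfn\in\Z^N}\wh{H}(\bfn)\int_{(\R/\Z)^N}e^{2\pi i\bfn\cdot\bftheta}d\lambda(\bftheta).$$
Finally, since $\lambda$ is a positive (hence real-valued) measure, $\overline{\wh{\lambda}(\bfn)}=\int e^{2\pi i\bfn\cdot\bftheta}d\lambda(\bftheta)$, and the desired identity follows.

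I do not anticipate any serious obstacle here: the argument is essentially Fubini applied to an absolutely convergent Fourier expansion. The only subtlety worth watching is the distinction between $H$ and its continuous Fourier-series representative, which is immaterial both for $\lambda$-integration (because both sides of the identity depend only on the continuous representative, and $\lambda$-integrability of $H$ is what is actually being asserted) and for the final equality.
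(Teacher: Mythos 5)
Your proof is correct and follows essentially the same route as the paper: expand $H$ via the Fourier inversion formula and integrate term by term against $\lambda$ using Fubini. You spell out the justification for Fubini (absolute summability of $\wh{H}$ times the total mass of $\lambda$) and the handling of the a.e.\ representative more explicitly than the paper, which simply cites ``the Fourier inversion formula, together with Fubini's theorem,'' but the underlying argument is the same.
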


\begin{proof}
Let $\lambda$ be a finite regular measure on $(\R/\Z)^N$. Its Fourier-Stieltjes transform is the function $\wh{\lambda}:\Z^N\to\C$ given by
$$\wh{\lambda}(\bfn)=\int_{(\R/\Z)^N}e^{-2\pi i\bfn\cdot\bftheta}d\lambda(\bftheta).$$
Since both $H$ and $\wh{H}$ are Haar-integrable, we apply the Fourier inversion formula that, together with Fubini's theorem, leads to
\begin{align*}
\int_{(\R/\Z)^N}Hd\lambda
&=\int_{(\R/\Z)^N}\left(\sum_{\bfn\in\Z^N}\wh{H}(\bfn)e^{2\pi i\bfn\cdot\bftheta}\right)d\lambda(\bftheta)\\
&=\sum_{\bfn\in\Z^N}\wh{H}(\bfn)\left(\int_{(\R/\Z)^N}e^{2\pi i\bfn\cdot\bftheta}d\lambda(\bftheta)\right)\\
&=\sum_{\bfn\in\Z^N}\wh{H}(\bfn)\overline{\wh{\lambda}(\bfn)},
\end{align*}
this equality containing the fact that $H$ is integrable with respect to $\lambda$ and that $\wh{H}\wh{\lambda}$ is Haar-integrable.
\end{proof}

\begin{lemma}\label{difinttwosum}
Let $H:(\R/\Z)^N\rightarrow\C$ be a Haar-integrable function such that $\wh{H}$ is also Haar-integrable, and let $\lambda$ be a finite regular measure on $(\R/\Z)^N$. Then
\begin{equation*}
\int_{(\R/\Z)^N}Hd\lambda-\int_{(\R/\Z)^N}Hd\lambdasn
=\wh{H}(\bfzero)\left(\overline{\wh{\lambda}(\bfzero)}-1\right)+\sum_{\bfn\neq\bfzero}\wh{H}(\bfn)\overline{\wh{\lambda}(\bfn)}.
\end{equation*}
\end{lemma}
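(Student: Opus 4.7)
The plan is to apply Lemma \ref{lemmalambda} twice, once with the general measure $\lambda$ and once with the Haar measure $\lambdasn$, and then take the difference.

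First I would apply Lemma \ref{lemmalambda} directly to $\lambda$ to obtain the identity
\begin{equation*}
\int_{(\R/\Z)^N}Hd\lambda=\sum_{\bfn\in\Z^N}\wh{H}(\bfn)\overline{\wh{\lambda}(\bfn)}.
\end{equation*}
Next, I would compute the Fourier--Stieltjes transform of $\lambdasn$, viewed as a measure on $(\R/\Z)^N$ via the identification. By orthogonality of the characters $\bftheta\mapsto e^{-2\pi i\bfn\cdot\bftheta}$ on $(\R/\Z)^N$, one has $\wh{\lambdasn}(\bfzero)=1$ and $\wh{\lambdasn}(\bfn)=0$ for $\bfn\neq\bfzero$. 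Applying Lemma \ref{lemmalambda} with $\lambda$ replaced by $\lambdasn$ then yields
\begin{equation*}
\int_{(\R/\Z)^N}Hd\lambdasn=\wh{H}(\bfzero).
\end{equation*}

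Finally, I would subtract these two formulas, split off the $\bfn=\bfzero$ term from the sum $\sum_{\bfn\in\Z^N}\wh{H}(\bfn)\overline{\wh{\lambda}(\bfn)}$, and group it with the $-\wh{H}(\bfzero)$ contribution to recover the factor $\overline{\wh{\lambda}(\bfzero)}-1$. This gives exactly the claimed identity, and the two applications of Lemma \ref{lemmalambda} already guarantee that all integrals and sums in sight converge absolutely, so no additional convergence argument is needed.

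There is no genuine obstacle here: the only substantive input is the (standard) computation of the Fourier--Stieltjes transform of the Haar measure on the compact subtorus $(S^1)^N$, and this is immediate from the orthogonality relations for characters of $(\R/\Z)^N$.
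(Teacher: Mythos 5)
Your proof is correct and follows essentially the same route as the paper's: compute the Fourier--Stieltjes transform of $\lambdasn$ via character orthogonality, apply Lemma~\ref{lemmalambda} to both measures, subtract, and split off the $\bfn=\bfzero$ term. No differences worth noting.
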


\begin{proof}
Since $\lambdasn$ is the Haar probability measure of $(\R/\Z)^N$, for any $\bfn\in\Z^N$ we have
$$\wh{\lambdasn}(\bfn)=\int_{(\R/\Z)^N}e^{-2\pi i\bfn\cdot\bftheta}d\bftheta=\begin{cases}1 & \text{if }\bfn=\bfzero,\\ 0 & \text{otherwise.}\end{cases}$$
Hence, by Lemma \ref{lemmalambda} we obtain
\begin{equation*}
\int_{(\C^\times)^N}Hd\lambdasn=\sum_{\bfn\in\Z^N}\wh{H}(\bfn)\overline{\wh{\lambdasn}(\bfn)}=\wh{H}(\bfzero).
\end{equation*}
Then we have
\begin{multline*}
\int_{(\R/\Z)^N}Hd\lambda-\int_{(\R/\Z)^N}Hd\lambdasn\\
=\left(\sum_{\bfn\in\Z^N}\wh{H}(\bfn)\overline{\wh{\lambda}(\bfn)}\right)-\wh{H}(\bfzero)
=\wh{H}(\bfzero)\left(\overline{\wh{\lambda}(\bfzero)}-1\right)+\sum_{\bfn\neq\bfzero}\wh{H}(\bfn)\overline{\wh{\lambda}(\bfn)}.
\end{multline*}
\end{proof}

\subsection{Galois invariant sets} In this section we work with Galois invariant sets and study their height. For further details on basic Galois theory we refer to \cite{lang} and on heights of points, to \cite{HiDG}.

Let $\xi\in\bQ^\times$ and $f_\xi\in\Z[x]$ be the minimal polynomial of $\xi$ over the integers. Recall that the Weil height of $\xi$ is defined as
$$\h(\xi)=\frac{m(f_\xi)}{\deg(\xi)},$$
where $m(f_\xi)$ is the Mahler measure of $f_\xi$, given by
$$m(f_\xi)=\frac1{2\pi}\int_0^{2\pi}\log|f_{\xi}(e^{i\theta})|d\theta,$$
and $\deg(\xi)=[\Q(\xi):\Q]$ is the degree of the point $\xi$. This notion of height coincides with that in \cite[\S1.5]{HiDG}, which is defined using local decompositions.

Let $T\subset(\bQ^\times)^N$ be a finite Galois-invariant set, its height is defined as
$$\h(T)=\sum_{\bfalpha\in T}\h(\bfalpha),$$
where $\h(\alpha)$ is the height of $\alpha\in(\bQ^\times)^N$ as in \eqref{defh}. In particular, since the height of two Galois conjugate points coincide, if $T\subset(\bQ^\times)^N$ is a Galois orbit of cardinality $D$, we have
$$\h(T)=D\h(\bfalpha),$$
for any $\bfalpha\in T$.

\begin{lemma}\label{lemmaD}
Let $\bfxi=(\xi_1,\ldots,\xi_N)$ in $(\bQ^\times)^N$, $S$ its Galois orbit and set $D=\#S$. Then
\begin{enumerate}
\item \label{it1} $D=[\Q(\xi_1,\ldots,\xi_N):\Q]$,
\item \label{it2} for every $\bfn\in\Z^N$, we have that $\deg(\chi^\bfn(\bfxi))$ divides $D$.
\end{enumerate}
\end{lemma}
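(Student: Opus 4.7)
Both statements are essentially consequences of the orbit-stabilizer theorem combined with the Galois correspondence for finite (not necessarily normal) extensions.

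For part \eqref{it1}, the plan is to apply the orbit-stabilizer theorem to the action of $G=\gal(\bQ/\Q)$ on the point $\bfxi$. Since $G$ acts coordinatewise, an element $\sigma\in G$ stabilizes $\bfxi$ if and only if $\sigma(\xi_l)=\xi_l$ for every $l=1,\ldots,N$, which is exactly the condition that $\sigma$ fixes the field $K=\Q(\xi_1,\ldots,\xi_N)$ pointwise. Hence the stabilizer of $\bfxi$ equals $\gal(\bQ/K)$, and
\begin{equation*}
D=\#S=[G:\gal(\bQ/K)]=[K:\Q],
\end{equation*}
where the last equality is the standard fact that the index of $\gal(\bQ/K)$ in the absolute Galois group coincides with the degree of the finite extension $K/\Q$ (valid even when $K/\Q$ is not normal, since one can pass to a Galois closure and count cosets).

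For part \eqref{it2}, the plan is to observe that $\chi^\bfn(\bfxi)=\xi_1^{n_1}\cdots\xi_N^{n_N}$ lies in the field $K=\Q(\xi_1,\ldots,\xi_N)$, so $\Q(\chi^\bfn(\bfxi))\subseteq K$. The tower law then gives
\begin{equation*}
[K:\Q]=[K:\Q(\chi^\bfn(\bfxi))]\cdot[\Q(\chi^\bfn(\bfxi)):\Q],
\end{equation*}
so $\deg(\chi^\bfn(\bfxi))=[\Q(\chi^\bfn(\bfxi)):\Q]$ divides $[K:\Q]=D$ by part \eqref{it1}.

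Neither step presents a real obstacle; the only subtlety worth being careful about is that $K/\Q$ need not be Galois, so in part \eqref{it1} one should either invoke the general fact relating subgroup index and field degree, or reduce to the Galois case by enlarging $K$ to its Galois closure and using that both the index $[G:\gal(\bQ/K)]$ and the degree $[K:\Q]$ count the same set of embeddings $K\hookrightarrow\bQ$ over $\Q$.
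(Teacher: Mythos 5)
Your argument matches the paper's proof essentially line for line: both use the orbit--stabilizer theorem to identify $D$ with the index of the stabilizer $\gal(\bQ/\Q(\bfxi))$, then invoke Galois theory to equate that index with $[\Q(\bfxi):\Q]$, and both deduce part (2) from the containment $\chi^\bfn(\bfxi)\in\Q(\bfxi)$ via the tower law. Your extra remarks about handling the non-normal case by passing to a Galois closure are a sound clarification of a point the paper glosses over, but they do not change the approach.
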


\begin{proof}
If a group $G$ acts on a finite set $S$ transitively, then for any $x\in S$ the index of the stabilizer $G_x$ is equal to $\#S$, because the cosets $G/G_x$ stay in a natural one-to-one correspondence with the element of $S$. Applying this to $G=\gal(\bQ/\Q)$, $x=\bfxi$ and $S$ the Galois orbit of $\bfxi$, we find $[\gal(\bQ/\Q):\gal(\bQ/\Q(\bfxi))]=D$, which by Galois theory implies that $[\Q(\bfxi):\Q]=D$, proving the first statement. The second statement is immediate because $\chi^{\bfn}(\bfxi)\in\Q(\bfxi)$.
\end{proof}

\begin{lemma}\label{lemmalogxi}
Let $\xi\in\bQ^\times$, $d=\deg(\xi)$ and $S$ its Galois orbit. Then
$$\frac1d\sum_{\alpha\in S}|\log|\alpha||\leq2\h(\xi).$$
\end{lemma}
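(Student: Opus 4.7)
The plan is to compare the sum $\sum_{\alpha\in S}|\log|\alpha||$ to the Mahler measure by applying Jensen's formula both to $f_\xi$ and to its reciprocal polynomial. Write $f_\xi(x)=a_0\prod_{\alpha\in S}(x-\alpha)\in\Z[x]$; since the orbit $S$ consists of the conjugates of $\xi$ and $f_\xi$ is the minimal polynomial, these roots are exactly the elements of $S$ and $d=\#S$. The key elementary identity is
\[
|\log t|=\log^+ t+\log^+(1/t)\qquad(t>0),
\]
so that
\[
\sum_{\alpha\in S}|\log|\alpha||=\sum_{\alpha\in S}\log^+|\alpha|+\sum_{\alpha\in S}\log^+|1/\alpha|.
\]

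For the first sum, Jensen's formula (equivalently, the standard factorization of the Mahler measure) yields
\[
m(f_\xi)=\log|a_0|+\sum_{\alpha\in S}\log^+|\alpha|.
\]
For the second, I would introduce the reciprocal polynomial $f_\xi^{*}(x)=x^d f_\xi(1/x)\in\Z[x]$, whose roots are $\{1/\alpha:\alpha\in S\}$ and whose leading coefficient is the constant term $a_d$ of $f_\xi$. Note $a_d\ne 0$ because $\xi\ne 0$. Applying Jensen again,
\[
m(f_\xi^{*})=\log|a_d|+\sum_{\alpha\in S}\log^+|1/\alpha|.
\]

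The crucial computation is $m(f_\xi^{*})=m(f_\xi)$. Using $\prod_{\alpha\in S}\alpha=(-1)^d a_d/a_0$ and $\sum_{\alpha\in S}\log|\alpha|=\sum_{|\alpha|>1}\log|\alpha|-\sum_{|\alpha|<1}(-\log|\alpha|)$, a direct check shows $m(f_\xi)-m(f_\xi^{*})=\log|a_0|-\log|a_d|+\log|a_d/a_0|=0$. Substituting,
\[
\sum_{\alpha\in S}|\log|\alpha||=2m(f_\xi)-\log|a_0|-\log|a_d|.
\]

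To finish, I use that $f_\xi\in\Z[x]$ with $a_0,a_d$ nonzero integers, hence $|a_0|\ge 1$ and $|a_d|\ge 1$, so both $\log|a_0|$ and $\log|a_d|$ are nonnegative. Dividing by $d$ gives $\tfrac1d\sum_{\alpha\in S}|\log|\alpha||\le 2m(f_\xi)/d=2\h(\xi)$, as required. The only step that is not completely mechanical is the identity $m(f_\xi)=m(f_\xi^{*})$; this is a brief but essential verification, and once it is in hand the rest follows from the integrality of the coefficients of $f_\xi$.
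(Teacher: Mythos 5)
Your proof is correct. It relies on exactly the same two ingredients as the paper's argument---Jensen's formula for the Mahler measure and the fact that the leading and constant coefficients of $f_\xi\in\Z[x]$ have absolute value at least $1$---but organizes them differently. The paper writes $|\log t|=\log\max\{1,t^2\}-\log t$, applies Jensen once, and computes $\sum_{\alpha\in S}\log|\alpha|$ directly from the product of the roots; it drops the nonnegative term $\log|a_0|$ and then uses $\log|a_d|\le 2\log|a_d|$. You instead split $|\log t|=\log^+t+\log^+(1/t)$, apply Jensen both to $f_\xi$ and to the reciprocal polynomial $f_\xi^*$, and invoke the invariance $m(f_\xi)=m(f_\xi^*)$. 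The two routes are arithmetically equivalent---indeed your verification of $m(f_\xi)=m(f_\xi^*)$ reduces to the same product-of-roots computation the paper performs directly---so this is best seen as a more symmetric repackaging of the same proof rather than a fundamentally new one. The reciprocal-polynomial formulation is arguably cleaner, since it makes both boundary coefficients $a_0$ and $a_d$ appear on an equal footing and has you drop both at once, whereas the paper's presentation treats them asymmetrically.
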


\begin{proof}
We have
\begin{multline*}
\frac1d\sum_{\alpha\in S}|\log|\alpha||=\frac1d\sum_{\alpha\in S}\max\{-\log|\alpha|,\log|\alpha|\}\\
=\frac1d\sum_{\alpha\in S}\log\max\left\{\frac1{|\alpha|},|\alpha|\right\}=\frac1d\sum_{\alpha\in S}(\log\max\{1,|\alpha|^2\}-\log|\alpha|).
\end{multline*}
Let $P_\xi(x)=a_dx^d+\ldots+a_0\in\Z[x]$ be the minimal polynomial of $\xi$ over $\Z$. Since $S$ is the Galois orbit of $\xi$, we have
$$P_\xi(x)=a_d\prod_{\alpha\in S}(x-\alpha)\ \text{ and }\ a_0=(-1)^da_d\prod_{\alpha\in S}\alpha.$$

Since $|a_0|$ is a nonzero positive integer, we obtain
\begin{multline*}
\frac1d\sum_{\alpha\in S}(\log\max\{1,|\alpha|^2\}-\log|\alpha|)=\frac1d\sum_{\alpha\in S}\log\max\{1,|\alpha|^2\}+\log\frac{|a_d|}{|a_0|}\\
\leq\frac1d\sum_{\alpha\in S}\log\max\{1,|\alpha|^2\}+\log|a_d|\\
\leq2\left(\frac1d\sum_{\alpha\in S}\log\max\{1,|\alpha|\}+\log|a_d|\right)
=2\h(\xi),
\end{multline*}
where the last equality is given by Jensen's formula for the Mahler measure \cite[Proposition 1.6.5]{HiDG}.
\end{proof}

\begin{lemma}\label{lemmaheightbfxi}
Let $\bfxi_1\in(\bQ^\times)^N$ and consider its Galois orbit $\{\bfxi_1,\ldots,\bfxi_D\}$, where $\bfxi_j=(\xi_{j,1},\ldots,\xi_{j,N})$ for every $j=1,\ldots,D$. Then
$$\frac1D\sum_{l=1}^N\sum_{j=1}^D|\log|\xi_{j,l}||\leq2\h(\bfxi_1).$$
\end{lemma}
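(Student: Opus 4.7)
The plan is to reduce the $N$-dimensional statement to the one-dimensional Lemma \ref{lemmalogxi} by summing over coordinates.

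First I would fix an index $l\in\{1,\dots,N\}$ and analyze the multiset $\{\xi_{j,l}\}_{j=1}^{D}$. Let $S_l$ denote the Galois orbit of $\xi_{1,l}$ in $\bQ^\times$, which has cardinality $d_l=\deg(\xi_{1,l})$. Projection onto the $l$-th coordinate is a $\gal(\bQ/\Q)$-equivariant map from the orbit of $\bfxi_1$ onto $S_l$, and since both sets are homogeneous spaces under the Galois group, this projection is a surjection with fibers of constant cardinality $D/d_l$ (note that $d_l\mid D$ by Lemma \ref{lemmaD}\eqref{it2}). Therefore each element $\alpha\in S_l$ occurs exactly $D/d_l$ times among $\{\xi_{j,l}\}_{j=1}^{D}$, which yields the combinatorial identity
\begin{equation*}
\frac1D\sum_{j=1}^D\bigl|\log|\xi_{j,l}|\bigr|=\frac1{d_l}\sum_{\alpha\in S_l}\bigl|\log|\alpha|\bigr|.
\end{equation*}

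Next I would apply Lemma \ref{lemmalogxi} to the algebraic number $\xi_{1,l}$, giving
\begin{equation*}
\frac1{d_l}\sum_{\alpha\in S_l}\bigl|\log|\alpha|\bigr|\leq2\,\h(\xi_{1,l}).
\end{equation*}
Summing over $l=1,\ldots,N$ and using the definition \eqref{defh} of the height on $(\bQ^\times)^N$ concludes the proof. No step should present real difficulty; the only point requiring care is justifying that each Galois conjugate of $\xi_{1,l}$ appears exactly $D/d_l$ times in the $l$-th coordinates of the Galois orbit of $\bfxi_1$, which follows from the transitivity of the Galois action together with the orbit-stabilizer argument already used in the proof of Lemma \ref{lemmaD}.
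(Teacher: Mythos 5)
Your proof is correct and follows essentially the same route as the paper: project to each coordinate, observe via Lemma \ref{lemmaD} that each conjugate of $\xi_{1,l}$ appears exactly $D/\deg(\xi_{1,l})$ times in the $l$-th coordinates of the orbit, apply Lemma \ref{lemmalogxi} coordinatewise, and sum. Your justification via Galois-equivariance of the projection and constant fiber size is a slightly more explicit phrasing of the same combinatorial fact the paper asserts.
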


\begin{proof} For every $l=1,\ldots,N$, the elements $\xi_{j,l}$ and $\xi_{k,l}$ are conjugates. Let us denote by $S_l$ the Galois orbit of $\xi_{1,l}$. By Lemma \ref{lemmaD}, we have that $\#S_l=\deg(\xi_{1,l})$ divides $D$. This is, there is a positive integer $k_l$ such that $D=\deg(\xi_{1,l})k_l$, where $k_l$ is exactly the number of times each element of the orbit is repeated in $\{\xi_{1,l},\ldots,\xi_{D,l}\}$. We obtain
\begin{multline*}\frac1D\sum_{l=1}^N\sum_{j=1}^D|\log|\xi_{j,l}||=\sum_{l=1}^N\frac{1}{k_l\deg(\xi_{1,l})}\sum_{j=1}^D|\log|\xi_{j,l}||
\\=\sum_{l=1}^N\frac1{\deg(\xi_{1,l})}\sum_{\alpha\in S_l}|\log|\alpha||
\leq\sum_{l=1}^N2\h(\xi_{1,l})=2\h(\bfxi_1),
\end{multline*}
where the inequality follows from Lemma \ref{lemmalogxi}.
\end{proof}

\begin{lemma}\label{countelementorbitbound}
Let $S\subset\bQ^\times$ be a Galois-invariant set of cardinality $D$. For every $0<\delta<1$, we have
$$\#S_\delta<2\left(\log\frac1\delta\right)^{-1}\h(S),$$
where $S_\delta=\{\alpha\in S:|\log|\alpha||>\log\frac1\delta\}$.
\end{lemma}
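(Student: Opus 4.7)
The plan is a direct combination of Lemma \ref{lemmalogxi} with a Markov-type tail estimate. Since $S$ is a finite Galois-invariant set, it decomposes as a disjoint union of Galois orbits $S = T_1 \sqcup \cdots \sqcup T_r$. For each orbit $T_i$, fix a representative $\xi_i \in T_i$ so that $\#T_i = \deg(\xi_i)$. Lemma \ref{lemmalogxi} then gives
\[
\sum_{\alpha \in T_i} \bigl|\log|\alpha|\bigr| \;\leq\; 2\deg(\xi_i)\,\h(\xi_i) \;=\; 2\,\h(T_i),
\]
and summing these inequalities over $i=1,\dots,r$ yields the global bound
\[
\sum_{\alpha \in S} \bigl|\log|\alpha|\bigr| \;\leq\; 2\,\h(S).
\]

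For the second step, I would use the definition of $S_\delta$. Every $\alpha \in S_\delta$ satisfies the strict inequality $|\log|\alpha|| > \log(1/\delta)$, so assuming $S_\delta \neq \emptyset$ one obtains
\[
\#S_\delta \cdot \log\tfrac{1}{\delta} \;<\; \sum_{\alpha \in S_\delta} \bigl|\log|\alpha|\bigr| \;\leq\; \sum_{\alpha \in S} \bigl|\log|\alpha|\bigr| \;\leq\; 2\,\h(S).
\]
Since $0<\delta<1$ we have $\log(1/\delta) > 0$, and dividing through gives the announced inequality. If $S_\delta$ is empty the statement is trivial (provided $\h(S)>0$; otherwise all elements are roots of unity by Kronecker's theorem and $S_\delta$ is automatically empty).

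There is no real obstacle: the estimate is essentially Markov's inequality applied to the function $\alpha \mapsto |\log|\alpha||$ on $S$, where Lemma \ref{lemmalogxi} supplies the required $L^1$-style control in terms of $\h(S)$. The only subtlety worth mentioning in the write-up is the strictness of the inequality, which comes for free from the strict inequality in the definition of $S_\delta$.
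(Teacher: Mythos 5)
Your proof is correct and matches the paper's approach exactly: decompose $S$ into Galois orbits, apply Lemma \ref{lemmalogxi} to bound $\sum_{\alpha\in S}|\log|\alpha||$ by $2\h(S)$, and conclude with a Markov-type tail estimate. The only cosmetic difference is the order of presentation --- the paper states the Markov step first and feeds in the orbit decomposition at the end, while you establish the global $\L^1$ bound first --- and your parenthetical on the case $S_\delta=\emptyset$, $\h(S)=0$ (where the strict inequality degenerates to $0<0$) is a real, if minor, gap that the paper's proof also silently skips; in the application (Proposition \ref{propnus}) only the non-strict bound is used, so nothing is affected.
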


\begin{proof}
Write $S$ as a finite disjoint union of Galois orbits
$$S=S_1\sqcup\cdots\sqcup S_m.$$
By definition, for any $\alpha\in S_\delta$ we have
$$1<\left(\log\frac1\delta\right)^{-1}|\log|\alpha||.$$
Hence, we obtain
\begin{multline*}
\#S_\delta<\sum_{\alpha\in S_\delta}\left(\log\frac1\delta\right)^{-1}|\log|\alpha||\leq\left(\log\frac1\delta\right)^{-1}\sum_{\alpha\in S}|\log|\alpha||\\
=\left(\log\frac1\delta\right)^{-1}\sum_{l=1}^m\sum_{\alpha\in S_l}|\log|\alpha||\leq\left(\log\frac1\delta\right)^{-1}\sum_{l=1}^m2\h(S_l)=2\left(\log\frac1\delta\right)^{-1}\h(S),
\end{multline*}
where the last inequality holds by Lemma \ref{lemmalogxi}.
\end{proof}

\subsection{The generalized degree} We study now the notion of the generalized degree of a point in the algebraic torus defined in \eqref{defD}. First of all, let us see that in dimension one, it coincides with the notion of the degree of the algebraic number. Let $\xi\in\bQ^\times$, then
$$\D(\xi)=\min_{n\neq0}\{|n|\deg(\xi^n)\}.$$
For every non-zero integer $n$, let $Q_n(x)$ be the minimal polynomial of $\xi^{|n|}$ over $\Z$, of degree $\deg(\xi^{|n|})=\deg(\xi^n)$. By setting $R_n(x)=Q_n(x^{|n|})\in\Z[x]$ we obtain that $R_n(\xi)=0$ and this implies that
$$\deg(\xi)\leq\deg(R_n(x))=|n|\deg(\xi^n).$$
Hence, $\D(\xi)=\deg(\xi)$.

\begin{rem}\label{remgendeg}
For $N\geq1$ and every $\bfxi=(\xi_1,\ldots,\xi_N)$ in $(\bQ^\times)^N$ we have that
$$\D(\bfxi)\leq\min\{\deg(\xi_1),\ldots,\deg(\xi_N)\}.$$
This holds since
$$\{\deg(\xi_1),\ldots,\deg(\xi_N)\}\subset\{\deg(\chi^\bfn(\bfxi)):\bfn\neq\bfzero\}.$$
Thus, for a particular choice of $\bfxi$, the generalized degree can be computed after a finite number of steps by considering all $\bfn\neq\bfzero$ such that $\|\bfn\|_1\leq\min\{\deg(\xi_1),\ldots,\deg(\xi_N)\}$.
\end{rem}

For $N=1$, a strict sequence $(\xi_k)_{k\geq1}$ in $\bQ^\times$ such that $\underset{k\to\infty}{\lim}\h(\xi_k)= 0$ verifies that $\underset{k\to\infty}{\lim}\deg(\xi_k)=\infty$. Indeed, suppose there is some $c>0$ such that $\deg(\xi_k)\leq c$ for every $k\geq0$. By Northcott's theorem \cite[Theorem 1.6.8]{HiDG}, there are only finitely many elements with bounded degree and height. Hence, there is some $\alpha\in\bQ^\times$ such that $\xi_k=\alpha$ for infinitely many $k$'s. Since $\h(\xi_k)$ tends to $0$ as $k$ goes to infinity, by Kronecker's theorem \cite[Theorem 1.5.9]{HiDG} we necessarily have $\h(\alpha)=0$, which implies that $\alpha$ is a root of unity. In particular, there is an infinite subsequence of $(\xi_k)_{k\geq1}$ contained in a proper algebraic subgroup of $\bQ^\times$ which is not possible by the assumption that the sequence is strict.

The following lemma is a generalization to the higher dimensional case of this fact.

\begin{lemma}\label{strictseqDinfty}
Let $(\bfxi_k)_{k\geq1}$ be a strict sequence in $(\bQ^\times)^N$ such that $\lim_{k\to\infty}\h(\bfxi_k)=0$. Then
$$\lim_{k\to\infty}\D(\bfxi_k)=\infty.$$
\end{lemma}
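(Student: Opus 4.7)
The plan is to argue by contradiction: assume $\D(\bfxi_k)$ does not tend to infinity, so that after passing to a subsequence we may fix some $M>0$ with $\D(\bfxi_k)\leq M$ for all $k$. The goal is to produce a proper algebraic subgroup $Y\subset(\bQ^\times)^N$ containing infinitely many $\bfxi_k$, contradicting strictness. The argument will mimic the one-dimensional discussion that precedes the lemma, reducing to it via a monomial map.

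First I would exploit the definition \eqref{defD} of $\D$: for each $k$ there exists $\bfn_k\in\Z^N\setminus\{\bfzero\}$ with $\|\bfn_k\|_1\deg(\chi^{\bfn_k}(\bfxi_k))\leq M$. Both factors are bounded by $M$, in particular $\|\bfn_k\|_1\leq M$. Since $\{\bfn\in\Z^N:\|\bfn\|_1\leq M\}$ is finite, a further pigeonhole extraction gives a single $\bfn\in\Z^N\setminus\{\bfzero\}$ and a subsequence along which $\bfn_k=\bfn$ and $\deg(\chi^{\bfn}(\bfxi_k))\leq M$.

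Next I would control the height of the one-dimensional points $\chi^{\bfn}(\bfxi_k)\in\bQ^\times$. Using that $\h(\xi^{m})=|m|\h(\xi)$ and the subadditivity of the Weil height under products, together with the definition \eqref{defh}, one gets $\h(\chi^{\bfn}(\bfxi_k))\leq\|\bfn\|_1\h(\bfxi_k)\leq M\h(\bfxi_k)$, which tends to $0$ with $k$. Now the sequence $(\chi^{\bfn}(\bfxi_k))_k$ in $\bQ^\times$ has bounded degree and height going to $0$, so Northcott's theorem yields a constant value $\alpha\in\bQ^\times$ along a subsequence, and Kronecker's theorem (applied as in the paragraph preceding the lemma) forces $\h(\alpha)=0$, i.e.\ $\alpha$ is a root of unity. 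Let $m\geq 1$ be its order.

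Finally I would upgrade this into strictness violation: along the subsequence, $\chi^{m\bfn}(\bfxi_k)=\alpha^m=1$, so $\bfxi_k$ lies in the algebraic subgroup
\[
Y=\{\bfz\in(\bQ^\times)^N:\chi^{m\bfn}(\bfz)=1\},
\]
which is proper because $m\bfn\neq\bfzero$. This contradicts the assumption that $(\bfxi_k)_k$ is strict, completing the proof. The only subtle point, and the main conceptual step, is the last one: the condition $\chi^{\bfn}(\bfxi_k)=\alpha$ only defines a coset of a subgroup, and one must pass to the power $m$ to convert it into a defining equation of a proper algebraic subgroup.
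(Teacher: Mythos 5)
Your proof is correct and follows essentially the same route as the paper: reduce to dimension one via the monomial map $\chi^\bfn$, control $\h(\chi^\bfn(\bfxi_k))$ by $\|\bfn\|_1\h(\bfxi_k)$, apply Northcott and Kronecker, and use the kernel of $\chi^{m\bfn}$ to contradict strictness (the paper packages the last step by noting that $(\chi^\bfn(\bfxi_k))_k$ is itself a strict sequence in $\bQ^\times$, which relies on exactly the same observation about $\ker\chi^{m\bfn}$). The only cosmetic difference is that you phrase everything as a contradiction after an initial pigeonhole on $\bfn_k$, whereas the paper first proves $\deg(\chi^\bfn(\bfxi_k))\to\infty$ for each fixed $\bfn$ and then does the pigeonhole.
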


\begin{proof}
Since the sequence $(\bfxi_k)_{k\geq0}$ is strict, the sequence $(\chi^\bfn(\bfxi_k))_{k\geq0}$ is a strict sequence in $\bQ^\times$ for every $\bfn\neq\bfzero$.

Write $\bfxi_k=(\xi_{k,1},\ldots,\xi_{k,N})$ and let $\bfn=(n_1,\ldots,n_N)\neq\bfzero$. We have
\begin{multline*}
\h(\chi^\bfn(\bfxi_k))=\h(\xi_{k,1}^{n_1}\cdots\xi_{k,N}^{n_N})\leq\h(\xi_{k,1}^{n_1})+\ldots+\h(\xi_{k,N}^{n_N})\\
=|n_1|\h(\xi_{k,1})+\ldots+|n_N|\h(\xi_{k,N})\leq\|\bfn\|_1\h(\bfxi_k)\xrightarrow[k\to\infty]{}0,
\end{multline*}
where the inequality follows from \cite[\S 1.5.14]{HiDG}.

Thus, as we just saw, we have that for every $\bfn\neq\bfzero$
$$\lim_{k\to\infty}\deg(\chi^\bfn(\bfxi_k))=\infty.$$
Finally, by Remark \ref{remgendeg}, for every $k\geq0$ there is $\bfn_k\neq\bfzero$ with bounded $1$-norm such that $\D(\bfxi_k)=\|\bfn_k\|_1\deg(\chi^{\bfn_k}(\bfxi_k))$. Hence
$$\lim_{k\to\infty}\D(\bfxi_k)=\infty,$$
completing the proof.
\end{proof}

\section{Proof of the main result}\label{sec2}

In this section we give the proof of Theorem \ref{maintheor}. As we mentioned in the introduction, we do so by using Fourier analysis techniques and reducing the problem, via projections, to the one-dimensional case, where the result follows from Favre and Rivera-Letelier's \cite[Corollary 1.4]{FRL}. 

Before stating this result, we give the definition of the spherical distance on the Riemann sphere. Let us identify the projective complex line with the unit sphere $S^2$ of $\R^3$. Let $S^2\setminus\{(0,0,1)\}\rightarrow\C$ be the stereographic projection, where we identify the equator of $S^2$ with the set $\{z\in\C:|z|=1\}$. Composing it with the standard inclusion $\C\hookrightarrow\P^1(\C)$ gives a map $S^2\setminus\{(0,0,1)\}\to\P^1(\C)\setminus\{(0:1)\}$, that we extend to a homeomorphism $\rho:S^2\to\P^1(\C)$ by setting $\rho(0,0,1)=(0:1)$. The spherical distance $\dsph$ on $\P^1(\C)$ is given by the length of the arc on $S^2$ under this identification and extended to $\P^1(\C)^N$ for $\bfp=(p_1,\ldots,p_N)$ and $\bfp'=(p_1',\ldots,p_N')$ as
$$\dsph(\bfp,\bfp')=\left(\sum_{l=1}^N\dsph(p_l,p_l')^2\right)^{\frac12}.$$

A function $f:\P^1(\C)^N\rightarrow\C$ is a {\em Lipschitz function} with respect to the distance $\dsph$ if there is a constant $K\geq0$ such that
\begin{equation}\label{lipfun}|f(\bfp)-f(\bfp')|\leq K\dsph(\bfp,\bfp')\text{ for every }\bfp,\bfp'\in\P^1(\C)^N.\end{equation}
If $f$ is a Lipschitz function with respect to the spherical distance, its {\em Lipschitz constant} $\lipsph(f)$ is the smallest $K\geq0$ such that \eqref{lipfun} holds.

We now state Favre and Rivera-Letelier's result together with the explicit constants computed in the second author's Ph.D. Thesis \cite[Theorem II]{thesisM}.

\begin{teor}\label{thmFRLthesisM}
There is a positive constant $C_0\leq15$ such that for every $\Clf^1$- function $f:\P^1(\C)\rightarrow\R$ and every $\xi\in\bQ^\times$
\begin{equation*}
\left|\int_{\P^1(\C)}fd\mu_S-\int_{\P^1(\C)}fd\lambdas\right|\leq\lipsph(f)\left(\frac{\pi}{\deg(\xi)}+\left(4\h(\xi)+C_0\frac{\log(\deg(\xi)+1)}{\deg(\xi)}\right)^{\frac12}\right),
\end{equation*}
where $S$ is the Galois orbit of $\xi$, $\mu_S$ the discrete probability measure associated to it and $\lipsph$ stands for the Lipschitz constant with respect to the spherical distance on the Riemann sphere.

In particular, if $\h(\xi)\leq1$, then
\begin{equation*}
\left|\int_{\P^1(\C)}fd\mu_S-\int_{\P^1(\C)}fd\lambdas\right|\leq\lipsph(f)\left(4\h(\xi)+C\frac{\log(\deg(\xi)+1)}{\deg(\xi)}\right)^{\frac12},
\end{equation*}
for $C\leq64$.
\end{teor}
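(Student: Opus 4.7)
The plan is to adapt the potential-theoretic strategy of Favre and Rivera-Letelier \cite{FRL}: bound the discrepancy $\int f\,d\mu_S-\int f\,d\lambdas$ on the Riemann sphere by the square root of a regularized logarithmic energy $I(\nu)$ of the signed measure $\nu:=\mu_S-\lambdas$, and then evaluate this energy in closed form using the minimal polynomial $P_\xi(z)=a_d\prod_{\alpha\in S}(z-\alpha)\in\Z[z]$, Jensen's formula, and the arithmetic fact $|\mathrm{disc}(P_\xi)|\geq 1$. The explicit numerical constants $C_0\leq 15$ and $C\leq 64$ are obtained by tracking every inequality quantitatively, as carried out in \cite{thesisM}.

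\medskip\noindent\textbf{Step 1 (Lipschitz-to-energy inequality).} For a $\Clf^1$-function $f:\P^1(\C)\to\R$, I would first establish
$$\left|\int f\,d\nu\right|\;\leq\;\lipsph(f)\left(\tfrac{\pi}{\deg(\xi)}\,+\,I(\nu)^{1/2}\right),$$
where $I(\nu)=-\iint\log\dch(z,w)\,d\nu(z)\,d\nu(w)$ is a regularized chordal logarithmic energy. The argument combines Kantorovich--Rubinstein duality on $\P^1(\C)$ with a Sobolev-type bound on the resulting Wasserstein distance in terms of $I(\nu)$, together with a mollification of the atomic part $\mu_S$ at spherical scale $1/\deg(\xi)$ to neutralize the logarithmic singularities on the diagonal; the additive term $\pi/\deg(\xi)$ records the mollification defect.

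\medskip\noindent\textbf{Step 2 (Computing the energy).} Expand $I(\nu)=I(\mu_S,\mu_S)-2\,I(\mu_S,\lambdas)+I(\lambdas,\lambdas)$. A direct computation gives $I(\lambdas,\lambdas)=0$. Jensen's formula $\int_0^{2\pi}\log|e^{i\theta}-\alpha|\,\tfrac{d\theta}{2\pi}=\log^+|\alpha|$ together with $m(P_\xi)=\log|a_d|+\sum_\alpha\log^+|\alpha|$ yields
$$I(\mu_S,\lambdas)=-\,\h(\xi)+\tfrac{\log|a_d|}{\deg(\xi)}.$$
For the self-pairing, use the identity $\sum_{\alpha\neq\beta}\log|\alpha-\beta|=\log|\mathrm{disc}(P_\xi)|-(2d-2)\log|a_d|$ combined with the arithmetic lower bound $|\mathrm{disc}(P_\xi)|\geq 1$ and the Mahler bound $|a_d|\leq e^{d\,\h(\xi)}$; after adding the mollification contribution, this leads to $I(\nu)\leq 4\,\h(\xi)+C_0\,\tfrac{\log(\deg(\xi)+1)}{\deg(\xi)}$ with $C_0\leq 15$.

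\medskip\noindent\textbf{Step 3 (Assembly and main obstacle).} Substituting the bound of Step 2 into the inequality of Step 1 yields the first statement of the theorem. For the case $\h(\xi)\leq 1$, the additive $\pi/\deg(\xi)$ summand is absorbed into the square root by elementary algebra, using $\deg(\xi)\geq 1$ and $\log(\deg(\xi)+1)\geq \log 2$ to compare $\pi/\deg(\xi)$ with a multiple of $\sqrt{\log(\deg(\xi)+1)/\deg(\xi)}$, to arrive at the cleaner form with $C\leq 64$. The main obstacle is the \emph{quantitative} form of the discriminant bound needed to secure $C_0\leq 15$: one requires a sharp Mahler-type estimate of the shape $\log|\mathrm{disc}(P_\xi)|\leq (2d{-}2)\,m(P_\xi)+(2d{-}1)\log d+O(d)$, together with a carefully chosen mollifier whose dual Dirichlet energy is explicitly controlled. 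Once these numerically explicit ingredients are in hand, the remaining arithmetic is routine book-keeping.
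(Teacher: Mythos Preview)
This theorem is not proved in the paper; it is quoted from \cite[Corollary~1.4]{FRL} with the explicit constants supplied by \cite[Theorem~II]{thesisM}, and the paper gives only a one-paragraph description of the method. That description matches your overall architecture: regularize the atomic measure $\mu_S$ by convolution with a specific mollifier, control the defect $\int f\,d(\mu_S-\mu_{S,\varepsilon})$ by $\lipsph(f)$ times the mollification scale (this is the additive $\pi/\deg(\xi)$), bound $\int f\,d(\mu_{S,\varepsilon}-\lambdas)$ via a Cauchy--Schwarz inequality in the Dirichlet pairing, and finally bound the mutual energy of $\mu_S-\lambdas$ by $2\h(\xi)$ plus lower-order terms through Jensen's formula, the Mahler measure, and $|\mathrm{disc}(P_\xi)|\geq 1$.

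Two points where your sketch deviates from the FRL/thesis argument and would need care. First, Step~1 as you phrase it is not the Favre--Rivera-Letelier mechanism: they do \emph{not} pass through Kantorovich--Rubinstein duality or a Wasserstein bound, but apply Cauchy--Schwarz directly in the Dirichlet inner product, pairing $\nabla f$ against the gradient of the potential of the regularized signed measure. A transport-energy inequality of the form $W_1\lesssim I(\nu)^{1/2}$ on $\P^1(\C)$ is not standard and you have not justified it; if you mean the Dirichlet argument, say so. Second, in Step~2 you define $I(\nu)$ with the chordal kernel $-\log\dch$ but then compute with the Euclidean kernel $-\log|z-w|$ (that is the kernel for which $I(\lambdas,\lambdas)=0$ and Jensen applies verbatim); the two differ by the bounded term $\tfrac12\log(1+|z|^2)+\tfrac12\log(1+|w|^2)-\log 2$, which must be tracked explicitly if you want to land on $C_0\leq 15$ and $C\leq 64$.
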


The proof of this result relies on the interpretation of the height of a point in terms of the potential theory over the complex projective line. Given $\xi\in\bQ^\times$, it can be shown that the mutual energy of the signed measure $\mu_S-\lambdas$ is bounded above by twice the height of the point. Since this signed measure is not regular enough, Favre and Rivera-Letelier consider a regularization such that it has vanishing total mass and its trace measure has continuous potential. This allows to apply a Cauchy-Schwartz type inequality to the integral of the function with respect to the regularized measure. Together with the study of the integral of the function with respect to the difference of the measure and its regularization, this leads to their result. The explicitation of the constant in \cite{thesisM} is done by considering a specific regularization of the measure, which is done by convolution with an specific {\em mollifier}.

Consider the projection
\begin{equation*}
\begin{array}{cccc}
 \pi: & (\R/\Z)^N\times\R^N & \longrightarrow & (\R/\Z)^N \\
  & (\bftheta,\bfu) & \longmapsto & \bftheta.   
\end{array}
\end{equation*}
Under the natural identifications
\begin{equation*}
\begin{array}{ccc}
 (\C^\times)^N & \longrightarrow & (\R/\Z)^N\times\R^N \\
 (z_1,\ldots,z_N) & \longmapsto & \left(\left(\frac{\arg(z_1)}{2\pi},\ldots,\frac{\arg(z_N)}{2\pi}\right),(\log|z_1|,\ldots,\log|z_N|)\right)
\end{array}
\end{equation*}
\begin{center} and \end{center}
\begin{equation*}
\begin{array}{ccc}
 (S^1)^N & \longrightarrow & (\R/\Z)^N \\
 (z_1,\ldots,z_N) & \longmapsto & \left(\frac{\arg(z_1)}{2\pi},\ldots,\frac{\arg(z_N)}{2\pi}\right),
\end{array}
\end{equation*}
the map $\pi$ can be re-written as 
\begin{equation*}
\begin{array}{cccc}
 \pi: & (\C^\times)^N & \longrightarrow & (S^1)^N  \\
  & (z_1,\ldots,z_N) & \longmapsto & \left(\frac{z_1}{|z_1|},\ldots,\frac{z_N}{|z_N|}\right).
\end{array}
\end{equation*}

Let $\bfxi\in(\Q^\times)^N$, $S$ its Galois orbit and $\mu_S$ the discrete probability measure associated to it. If $F:(\R/\Z)^N\times\R^N\longrightarrow\C$ is integrable with respect to the measure $\lambdasn$, then we have
\begin{multline}\label{divideprooftwosum}
\left|\int_{(\R/\Z)^N\times\R^N}Fd\mu_S-\int_{(\R/\Z)^N\times\R^N}Fd\lambdasn\right|\\
\leq\left|\int_{(\R/\Z)^N\times\R^N}Fd\mu_S-\int_{(\R/\Z)^N}\wF d\nu_S\right|\\
+\left|\int_{(\R/\Z)^N}\wF d\nu_S-\int_{(\R/\Z)^N\times\R^N}Fd\lambdasn\right|,
\end{multline}
where $\wF:(\R/\Z)^N\to\R$ is defined by $\wF(\bftheta)=F(\bftheta,\bfzero)$, and the measure $\nu_S$ is the pushforward of the measure $\mu_S$, which is given by
\begin{equation}\label{eqnus}\nu_S=\pi_*\mu_S=\frac1{\#S}\sum_{\bfalpha\in S}\delta_{\frac{\alpha}{|\alpha|}}.\end{equation}

Using \eqref{divideprooftwosum}, we are able to divide the proof of the main result into two parts. The following proposition corresponds to the first one.

\begin{prop}\label{propsum1}
Let $\bfxi\in(\bQ^\times)^N$ and $S$ its Galois orbit. Let $F:(\C^\times)^N\longrightarrow\R$ be a Lipschitz function with respect to the distance $\d$ and such that it is integrable with respect to $\lambdasn$, then
\begin{equation*}
\left|\int_{(\R/\Z)^N\times\R^N}Fd\mu_S-\int_{(\R/\Z)^N}\wF d\nu_S\right|\leq2\lip(F)\h(\bfxi),
\end{equation*}
where $\wF(\bftheta)=F(\bftheta,\bfzero)$ and $\lip(F)$ is the Lipschitz constant of $F$.
\end{prop}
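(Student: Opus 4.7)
The plan is to write both integrals as explicit finite averages over the Galois orbit $S$, compare them pointwise on the orbit using the Lipschitz hypothesis, and then invoke Lemma \ref{lemmaheightbfxi} to convert the resulting sum of log-absolute values into a bound in terms of $\h(\bfxi)$.

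First I would unfold the definitions. Writing each $\bfalpha\in S$ in logarithmic-polar coordinates as $(\bftheta_\alpha,\bfu_\alpha)\in(\R/\Z)^N\times\R^N$, where the $l$-th component of $\bfu_\alpha$ is $\log|\alpha_l|$, and using the expression \eqref{eqnus} for $\nu_S=\pi_*\mu_S$, the two integrals become
\begin{equation*}
\int_{(\R/\Z)^N\times\R^N}F\,d\mu_S=\frac1{\#S}\sum_{\bfalpha\in S}F(\bftheta_\alpha,\bfu_\alpha),\qquad
\int_{(\R/\Z)^N}\wF\,d\nu_S=\frac1{\#S}\sum_{\bfalpha\in S}F(\bftheta_\alpha,\bfzero),
\end{equation*}
since $\wF(\bftheta)=F(\bftheta,\bfzero)$ by definition. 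The difference can then be compared termwise.

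Next I would apply the Lipschitz hypothesis for $F$ with respect to $\d$. For each $\bfalpha\in S$,
\begin{equation*}
|F(\bftheta_\alpha,\bfu_\alpha)-F(\bftheta_\alpha,\bfzero)|\leq \lip(F)\,\d((\bftheta_\alpha,\bfu_\alpha),(\bftheta_\alpha,\bfzero))=\lip(F)\,\|\bfu_\alpha\|_2,
\end{equation*}
where the last equality uses that the angular components agree, so only the Euclidean norm of the $\bfu_\alpha$ part survives in the definition of $\d$. Using the standard inequality $\|\cdot\|_2\leq\|\cdot\|_1$ in $\R^N$ and the explicit form of $\bfu_\alpha$, this gives
\begin{equation*}
|F(\bftheta_\alpha,\bfu_\alpha)-F(\bftheta_\alpha,\bfzero)|\leq \lip(F)\sum_{l=1}^N\bigl|\log|\alpha_l|\bigr|.
\end{equation*}

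Finally I would combine these bounds. Averaging over $\bfalpha\in S$ and using the triangle inequality,
\begin{equation*}
\left|\int_{(\R/\Z)^N\times\R^N}F\,d\mu_S-\int_{(\R/\Z)^N}\wF\,d\nu_S\right|\leq \frac{\lip(F)}{\#S}\sum_{\bfalpha\in S}\sum_{l=1}^N\bigl|\log|\alpha_l|\bigr|.
\end{equation*}
An application of Lemma \ref{lemmaheightbfxi} to $\bfxi$ and its Galois conjugates (which form $S$) bounds the right-hand side by $2\lip(F)\h(\bfxi)$, which is exactly the claimed inequality. There is no genuine obstacle here; the only care needed is the bookkeeping: matching the definition of $\d$ on $(\R/\Z)^N\times\R^N$ to the Lipschitz estimate, and invoking the $\|\cdot\|_2\leq\|\cdot\|_1$ inequality so that Lemma \ref{lemmaheightbfxi} applies directly.
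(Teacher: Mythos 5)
Your proof is correct and follows essentially the same route as the paper: both compare the two integrals as finite averages over the orbit $S$, apply the Lipschitz bound to $\d((\bftheta_\alpha,\bfu_\alpha),(\bftheta_\alpha,\bfzero))=\|\bfu_\alpha\|_2$, pass to the $\ell^1$ norm via $\|\cdot\|_2\leq\|\cdot\|_1$, and finish with Lemma~\ref{lemmaheightbfxi}.
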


\begin{proof}
With the above notation, we have
\begin{align*}
\left|\int_{(\R/\Z)^N\times\R^N}Fd\mu_S\right.&\left.-\int_{(\R/\Z)^N}\wF d\nu_S\right|=\left|\int_{(\R/\Z)^N\times\R^N}Fd\mu_S-\int_{(\R/\Z)^N\times\R^N}(F\circ\pi) d\mu_S\right|\\
&=\left|\int_{(\C^\times)^N}\left(F(z_1,\ldots,z_N)-F\left(\frac{z_1}{|z_1|},\ldots,\frac{z_N}{|z_N|}\right)\right)d\mu_S(z_1,\ldots,z_N)\right|\\
&\leq\frac1{\#S}\sum_{(\alpha_1,\ldots,\alpha_N)\in S}\left|F(\alpha_1,\ldots,\alpha_N)-F\left(\frac{\alpha_1}{|\alpha_1|},\ldots,\frac{\alpha_N}{|\alpha_N|}\right)\right|\\
&\leq\frac1{\#S}\lip(F)\sum_{(\alpha_1,\ldots,\alpha_N)\in S}\d\left((\alpha_1,\ldots,\alpha_N),\left(\frac{\alpha_1}{|\alpha_1|},\ldots,\frac{\alpha_N}{|\alpha_N|}\right)\right),
\end{align*}
where the last inequality is given by the fact that $F$ is a Lipschitz function with respect to the distance $\d$ of $(\C^\times)^N$. By the definition of this distance, we have
\begin{equation*}
\d\left((\alpha_1,\ldots,\alpha_N),\left(\frac{\alpha_1}{|\alpha_1|},\ldots,\frac{\alpha_N}{|\alpha_N|}\right)\right)=\left(\sum_{l=1}^N|\log|\alpha_l||^2\right)^{\frac12}\leq\sum_{l=1}^N|\log|\alpha_l||.
\end{equation*}
Hence, by Lemma \ref{lemmaheightbfxi}, we conclude
\begin{multline*}
\left|\int_{(\R/\Z)^N\times\R^N}Fd\mu_S-\int_{(\R/\Z)^N}\wF d\nu_S\right|\\
\leq\frac1{\#S}\lip(F)\sum_{(\alpha_1,\ldots,\alpha_N)\in S}\sum_{l=1}^N|\log|\alpha_l||\leq2\lip(F)\h(\bfxi).
\end{multline*}
\end{proof}

Let us study now the second summand in \eqref{divideprooftwosum}. First of all we observe that, since the measure $\lambdasn$ is supported on $(\R/\Z)^N\times\{\bfzero\}$, we can reduce the problem to the compact torus $(S^1)^N$. Indeed, with the notation as in \eqref{divideprooftwosum}, we have
\begin{equation*}
\left|\int_{(\R/\Z)^N}\wF d\nu_S-\int_{(\R/\Z)^N\times\R^N}Fd\lambdasn\right|=\left|\int_{(\R/\Z)^N}\wF d\nu_S-\int_{(\R/\Z)^N}\wF d\lambdasn\right|,
\end{equation*}
where $\nu_S$ is given by \eqref{eqnus}.

If $\wF:(\R/\Z)^N\longrightarrow\R$ is Haar-integrable and such that its Fourier transform $\wh{\wF}$ is also Haar-integrable, by Lemma \ref{difinttwosum} we have
\begin{equation*}
\int_{(\R/\Z)^N}\wF d\nu_S-\int_{(\R/\Z)^N}\wF d\lambdasn=\wh{\wF}(\bfzero)\left(\overline{\wh{\nu}_S(\bfzero)}-1\right)+\sum_{\bfn\neq\bfzero}\wh{\wF}(\bfn)\overline{\wh{\nu}_S(\bfn)},
\end{equation*}
where the Fourier-Stieltjes transform of $\nu_S$ is
\begin{equation}\label{FSnuS}
\wh{\nu}_S(\bfn)=\int_{(\R/\Z)^N}e^{-2\pi i\bfn\cdot\bftheta}d\nu_S(\bftheta)=\frac1{\#S}\sum_{(\alpha_1,\ldots,\alpha_N)\in S}e^{-i\bfn\cdot(\arg(\alpha_1),\ldots,\arg(\alpha_N))},
\end{equation}
for every $\bfn\in\Z^N$. In particular, $\wh{\nu}_S(\bfzero)=1$.

We obtain the following lemma.

\begin{lemma}\label{lemmawF}
Let $\wF:(\R/\Z)^N\longrightarrow\R$ be Haar-integrable and such that its Fourier transform is also Haar-integrable. With the notation as above, we have
\begin{equation*}
\int_{(\R/\Z)^N}\wF d\nu_S-\int_{(\R/\Z)^N}\wF d\lambdasn=\sum_{\bfn\neq\bfzero}\wh{\wF}(\bfn)\overline{\wh{\nu}_S(\bfn)}.
\end{equation*}
\end{lemma}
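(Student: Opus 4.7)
The plan is essentially to assemble the ingredients already collected in the paragraphs preceding the statement. First I would invoke Lemma \ref{difinttwosum}, applied to the Haar-integrable function $H=\wF$ (whose Fourier transform is also Haar-integrable by assumption) and to the finite regular measure $\lambda=\nu_S$ on $(\R/\Z)^N$. This yields the identity
\begin{equation*}
\int_{(\R/\Z)^N}\wF\, d\nu_S-\int_{(\R/\Z)^N}\wF\, d\lambdasn=\wh{\wF}(\bfzero)\left(\overline{\wh{\nu}_S(\bfzero)}-1\right)+\sum_{\bfn\neq\bfzero}\wh{\wF}(\bfn)\overline{\wh{\nu}_S(\bfn)}.
\end{equation*}

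The remaining step is to show that the boundary term $\wh{\wF}(\bfzero)\bigl(\overline{\wh{\nu}_S(\bfzero)}-1\bigr)$ vanishes. For this I would use the explicit formula \eqref{FSnuS} for the Fourier-Stieltjes transform of $\nu_S$, specialized at $\bfn=\bfzero$: since $\nu_S$ is a probability measure (being the pushforward of the discrete probability measure $\mu_S$ under the continuous map $\pi$), we have
\begin{equation*}
\wh{\nu}_S(\bfzero)=\int_{(\R/\Z)^N}1\, d\nu_S(\bftheta)=1,
\end{equation*}
so $\overline{\wh{\nu}_S(\bfzero)}-1=0$ and the corresponding summand disappears. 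Substituting this back gives the claimed identity.

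There is no real obstacle here: the lemma is a bookkeeping consequence of Lemma \ref{difinttwosum} together with the normalization of the probability measure $\nu_S$. The only thing to be careful about is to verify that all the hypotheses needed to apply Lemma \ref{difinttwosum} are in place, namely that $\wF$ and $\wh{\wF}$ are Haar-integrable (given in the statement) and that $\nu_S$ is finite and regular (which holds because it is a finitely supported probability measure on the compact group $(\R/\Z)^N$).
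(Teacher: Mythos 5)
Your proof is correct and follows exactly the same route as the paper: the paragraph preceding Lemma \ref{lemmawF} applies Lemma \ref{difinttwosum} with $\lambda=\nu_S$ and then observes from \eqref{FSnuS} that $\wh{\nu}_S(\bfzero)=1$, which kills the boundary term. Your proposal is just a careful restatement of that argument, so there is nothing to add.
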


We study now the Fourier-Stieltjes transform of the measure $\nu_S=\pi_*\mu_S$.

\begin{prop}\label{propnus}
There is a constant $C\leq64$ such that, for every $\bfn\neq\bfzero$ and every $0<\delta<1$, if $\h(\bfxi)\leq1,$ we have
\begin{equation*}
\left|\wh{\nu}_S(\bfn)\right|\leq\frac{-2}{\log\delta}\|\bfn\|_1\h(\bfxi)+\frac{4\sqrt{2}(\delta^2+9)}{\delta^3}\|\bfn\|_1\left(4\h(\bfxi)+C\frac{\log(\D(\bfxi)+1)}{\D(\bfxi)}\right)^{\frac12}.
\end{equation*}
\end{prop}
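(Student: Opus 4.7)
The strategy is to reduce the bound to a one-dimensional Fourier--Stieltjes estimate and apply Theorem \ref{thmFRLthesisM}. Set $\eta=\chi^{\bfn}(\bfxi)\in\bQ^\times$, $d=\deg(\eta)$, let $S_\eta$ be its Galois orbit and $\nu_\eta=\pi_*\mu_{S_\eta}$. Using $\sum_{l}n_{l}\arg(\alpha_{l})\equiv\arg(\chi^{\bfn}(\bfalpha))\pmod{2\pi}$ together with Lemma \ref{lemmaD} (as $\bfalpha$ runs through $S$, each element of $S_\eta$ is hit $\#S/d$ times), formula \eqref{FSnuS} becomes
\begin{equation*}
\wh{\nu}_S(\bfn)=\frac{1}{d}\sum_{\zeta\in S_\eta}e^{-i\arg\zeta}=\wh{\nu}_\eta(1).
\end{equation*}
Hence the task reduces to bounding $|\wh{\nu}_\eta(1)|$, after which I will translate back via the elementary inequalities $\h(\eta)\leq\|\bfn\|_1\h(\bfxi)$ (shown in the proof of Lemma \ref{strictseqDinfty}) and $d\geq\D(\bfxi)/\|\bfn\|_1$ (immediate from the definition of $\D(\bfxi)$).

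Fix $\delta\in(0,1)$ and split $S_\eta=S_\eta^{\mathrm{far}}\sqcup S_\eta^{\mathrm{close}}$ with $S_\eta^{\mathrm{far}}=\{\zeta\in S_\eta:|\log|\zeta||>\log(1/\delta)\}$. Since $\h(S_\eta)=d\,\h(\eta)$, Lemma \ref{countelementorbitbound} gives
\begin{equation*}
\frac{\#S_\eta^{\mathrm{far}}}{d}\leq\frac{2\h(\eta)}{\log(1/\delta)}\leq\frac{-2\|\bfn\|_1\h(\bfxi)}{\log\delta},
\end{equation*}
which is precisely the first summand of the stated bound. For the remaining conjugates, I would construct a Lipschitz cutoff $\phi_\delta:\P^1(\C)\to[0,1]$, equal to $1$ on the annulus $\{\delta\leq|\zeta|\leq 1/\delta\}$ (in particular on $S^1$) and vanishing at $0,\infty\in\P^1(\C)$. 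The real-valued functions $g_1=\phi_\delta\cos(\arg\zeta)$ and $g_2=-\phi_\delta\sin(\arg\zeta)$, extended continuously by $0$ to $\{0,\infty\}$, satisfy $\int g_j\,d\lambdas=0$ (since $\phi_\delta|_{S^1}\equiv 1$), so Theorem \ref{thmFRLthesisM} applied to each yields
\begin{equation*}
\left|\int_{\P^1(\C)}g_j\,d\mu_{S_\eta}\right|\leq\lipsph(g_j)\left(4\h(\eta)+C\frac{\log(d+1)}{d}\right)^{1/2}\qquad(j=1,2).
\end{equation*}

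Decomposing $e^{-i\arg\zeta}=(g_1+ig_2)(\zeta)+(1-\phi_\delta(\zeta))e^{-i\arg\zeta}$ and noting that $\{\phi_\delta<1\}\cap S_\eta\subseteq S_\eta^{\mathrm{far}}$, the contribution of $(1-\phi_\delta)$ is already absorbed into the far-point estimate above, while the inequality $|a+ib|\leq\sqrt{2}\max(|a|,|b|)$ gives
\begin{equation*}
\left|\int(g_1+ig_2)\,d\mu_{S_\eta}\right|\leq\sqrt{2}\max_{j}\lipsph(g_j)\left(4\h(\eta)+C\frac{\log(d+1)}{d}\right)^{1/2}.
\end{equation*}
Since $x\mapsto\log(x+1)/x$ is decreasing on $(0,\infty)$ and $d\geq\D(\bfxi)/\|\bfn\|_1$ with $\|\bfn\|_1\geq 1$, one obtains $\log(d+1)/d\leq\|\bfn\|_1\log(\D(\bfxi)+1)/\D(\bfxi)$; combining with $4\h(\eta)\leq 4\|\bfn\|_1\h(\bfxi)$, the square-root is bounded by $\|\bfn\|_1^{1/2}(4\h(\bfxi)+C\log(\D(\bfxi)+1)/\D(\bfxi))^{1/2}\leq\|\bfn\|_1(\cdots)^{1/2}$. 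The main technical obstacle is the explicit estimate $\max_{j}\lipsph(g_j)\leq\frac{4(\delta^2+9)}{\delta^3}$: the spherical conformal factor $2/(1+|\zeta|^2)$ couples delicately to both the radial cutoff (whose Euclidean slope is of order $1/\delta$ near $|\zeta|=\delta^{\pm 1}$) and the angular factor $\cos(\arg\zeta)$ (whose Euclidean gradient is of order $1/|\zeta|$), so this constant must be established by a careful pointwise computation --- presumably the content of the appendix on the Lipschitz constant of an auxiliary function alluded to in the introduction.
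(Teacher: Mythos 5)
Your strategy is essentially the same as the paper's: reduce to the one-dimensional point $\eta=\chi^{\bfn}(\bfxi)$ via $\wh{\nu}_S(\bfn)=\wh{\nu}_\eta(1)$ (Lemma~\ref{lemmaD}), split $S_\eta$ at the annulus $\{\delta\leq|\zeta|\leq 1/\delta\}$ and use Lemma~\ref{countelementorbitbound} for the outliers, then control the remainder through a $\Clf^1$ cutoff and Theorem~\ref{thmFRLthesisM}, finally converting $(\h(\eta),d)$ into $(\h(\bfxi),\D(\bfxi))$ by the monotonicity of $x\mapsto\log(x+1)/x$. The paper's $\rho_\delta$ plays exactly the role of your $\phi_\delta$, your $g_1+ig_2=\phi_\delta e^{-i\arg}$ is the conjugate of the paper's $f_\delta$, and the Lipschitz-constant estimate is indeed Lemma~\ref{lemmafdelta} in Appendix~\ref{appendixfdelta}. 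Your use of $|a+ib|\leq\sqrt{2}\max(|a|,|b|)$ in place of the triangle inequality is a harmless variant that lands on the same constant $4\sqrt{2}(\delta^2+9)/\delta^3$.

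There is, however, one genuine gap in the order of operations. You invoke the simplified form of Theorem~\ref{thmFRLthesisM}, namely
\[
\Bigl|\int g_j\,d\mu_{S_\eta}\Bigr|\leq\lipsph(g_j)\Bigl(4\h(\eta)+C\tfrac{\log(d+1)}{d}\Bigr)^{1/2},
\]
but this form is only available under $\h(\eta)\leq 1$, whereas the hypothesis gives $\h(\bfxi)\leq 1$ and only the weaker $\h(\eta)\leq\|\bfn\|_1\h(\bfxi)\leq\|\bfn\|_1$. For $\|\bfn\|_1$ large, $\h(\eta)$ can exceed~$1$, and the $\pi/d$ term cannot then be absorbed at the level of $\eta$ (the absorption $\pi/D+(4h+C_0\log(D+1)/D)^{1/2}\leq(4h+C\log(D+1)/D)^{1/2}$ fails when $h$ is unbounded). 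The paper avoids this by applying the full form of Theorem~\ref{thmFRLthesisM} with the $\pi/\deg$ term and $C_0\leq 15$, first rewriting everything in terms of $\|\bfn\|_1\deg(\chi^\bfn(\bfxi))\geq\D(\bfxi)$ and $\h(\bfxi)$, and only then absorbing $\pi/(\|\bfn\|_1\deg(\chi^\bfn(\bfxi)))$ into the square root using $\h(\bfxi)\leq 1$. Your argument would go through after this reordering. A second, minor point: the cutoff $\phi_\delta$ must be $\Clf^1$ (not merely Lipschitz) for Theorem~\ref{thmFRLthesisM} to apply, and it must vanish together with its derivative near $0$ and $\infty$ so that $g_1,g_2$ extend to $\Clf^1$ functions on $\P^1(\C)$ despite the singularity of $\arg$; the paper's piecewise-polynomial $\rho_\delta$ is engineered precisely for this.
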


\begin{proof}
Let $\bfn\neq\bfzero$ and let $S_\bfn$ be the Galois orbit of $\chi^\bfn(\bfxi)$. By Lemma \ref{lemmaD}, there is an integer $l_\bfn$ such that $\#S=l_\bfn\#S_\bfn$ and we know that every element $\alpha\in S_\bfn$ is repeated $l_\bfn$ times in $\{\chi^\bfn(\bfalpha):\bfalpha\in S\}$. Hence, by \eqref{FSnuS}, we obtain
\begin{equation*}
\overline{\wh{\nu}_S(\bfn)}=\frac1{\#S}\sum_{(\alpha_1,\ldots,\alpha_N)\in S}e^{i\bfn\cdot(\arg(\alpha_1),\ldots,\arg(\alpha_N))}=\frac1{\#S}\sum_{\bfalpha\in S}\frac{\chi^\bfn(\bfalpha)}{|\chi^\bfn(\bfalpha)|}=\frac1{\#S_\bfn}\sum_{\alpha\in S_\bfn}\frac{\alpha}{|\alpha|}.
\end{equation*}
For $0<\delta<1$, consider the function $f_\delta:\P^1(\C)\rightarrow\C$ defined as follows.
$$f_\delta(0:1)=0,\ f_\delta(1:z)=\rho_\delta(|z|)\frac{z}{|z|}\text{ for any }z\in\C,$$
where the function $\rho_\delta:\R\rightarrow[0,1]$ is given by
$$\rho_\delta(r)=\begin{cases}0 & \text{if } r<\frac\delta2,\\ \frac{(5\delta-4r)(\delta-2r)^2}{\delta^3} & \text{if } \frac\delta2\leq r\leq\delta,\\ 1 & \text{if } \delta<r<\frac1\delta,\\ (-2+\delta r)^2(-1+2\delta r) & \text{if } \frac1\delta\leq r\leq\frac2\delta,\\ 0 & \text{if } r>\frac2\delta.\end{cases}$$
In Lemma \ref{lemmafdelta}, we prove that $f_\delta$ is a $\Clf^1$-function such that, if we write $f_\delta=u_\delta+iv_\delta$ we have
$$\lipsph(u_\delta),\lipsph(v_\delta)\leq\frac{2\sqrt{2}(\delta^2+9)}{\delta^3},$$
where $\lipsph$ stands for the Lipschitz constant with respect to the spherical distance on the Riemann sphere.

For every $\bfn\neq\bfzero$, we have
\begin{align}\label{eqmu1}
\left|\overline{\wh{\nu}_S(\bfn)}- \right.&\left.\frac1{\#S}\sum_{\bfalpha\in S}^Df_\delta(1:\chi^\bfn(\bfalpha))\right| \\
\nonumber&=\left|\frac1{\#S}\sum_{\bfalpha\in S}\frac{\chi^\bfn(\bfalpha)}{|\chi^\bfn(\bfalpha)|}-\frac1{\#S}\sum_{\bfalpha\in S}\rho_\delta(|\chi^\bfn(\bfalpha)|)\frac{\chi^\bfn(\bfalpha)}{|\chi^\bfn(\bfalpha)|}\right|\\
\nonumber&=\left|\frac1{\#S}\sum_{\bfalpha\in S}\frac{\chi^\bfn(\bfalpha)}{|\chi^\bfn(\bfalpha)|}\left(1-\rho_\delta(|\chi^\bfn(\bfalpha)|)\right)\right|\\
\nonumber&\leq\frac1{\#S}\sum_{\bfalpha\in S}\left|1-\rho_\delta(|\chi^\bfn(\bfalpha)|)\right|.
\end{align}

Let us define, for every $\bfn\neq\bfzero$ and $0<\delta<1$, the set
$$J_{\bfn,\delta}=\left\{\bfalpha\in S:\delta\leq|\chi^\bfn(\bfalpha)|\leq\frac1\delta\right\}.$$
If $\bfalpha\in J_{\bfn,\delta}$, then $\rho_\delta(|\chi^\bfn(\bfalpha)|)=1$, and $0\leq\rho_\delta(|\chi^\bfn(\bfalpha)|)<1$ otherwise. Hence, we have
\begin{equation}\label{eqmu2}
\frac1{\#S}\sum_{\bfalpha\in S}\left|1-\rho_\delta(|\chi^\bfn(\bfalpha)|)\right|=\frac1{\#S}\sum_{\bfalpha\notin J_{\bfn,\delta}}1-\rho_\delta(|\chi^\bfn(\bfalpha)|)\leq\frac1{\#S}\sum_{\bfalpha\notin J_{\bfn,\delta}}1.
\end{equation}
Set 
$$S_{\bfn,\delta}=\left\{\alpha\in S_\bfn:|\log|\bfalpha||>\log\frac1\delta\right\},$$
then we obtain
\begin{equation}\label{eqmu3}
\frac1{\#S}\sum_{\bfalpha\notin J_{\bfn,\delta}}1=\frac1{\# S_\bfn}\sum_{\alpha\in S_{\bfn,\delta}}1\leq2\left(\log\frac1\delta\right)^{-1}\h(\chi^\bfn(\bfxi)),
\end{equation}
where the last inequality is given by Lemma \ref{countelementorbitbound}.

As we saw in the proof of Lemma \ref{strictseqDinfty}, for $\bfn\neq\bfzero$ we have
$$\h(\chi^\bfn(\bfxi))\leq\|\bfn\|_1\h(\bfxi).$$
Thus, putting this together with \eqref{eqmu1}, \eqref{eqmu2} and \eqref{eqmu3} we deduce that
\begin{equation}\label{muminusfdelta}
\left|\overline{\wh{\nu}_S(\bfn)}-\frac1{\#S}\sum_{\bfalpha\in S}f_\delta(1:\chi^\bfn(\bfalpha))\right|\leq2\left(\log\frac1\delta\right)^{-1}\|\bfn\|_1\h(\bfxi).
\end{equation}

On the other hand, we have
\begin{equation}\label{fdelta1}
\frac1{\#S}\sum_{\bfalpha\in S}f_\delta(1:\chi^\bfn(\bfalpha))=\frac1{l_\bfn\#S_\bfn}\sum_{\alpha\in S_\bfn}l_\bfn f_\delta(1:\alpha)=\int_{\P^1(\C)}f_\delta d\mu_{S_\bfn},
\end{equation}
where $\mu_{S_\bfn}$ is the discrete probability measure on $\P^1(\C)$ associated to the Galois orbit $S_\bfn$ of $\chi^\bfn(\bfxi)$.

Since $\lambdas$ is the measure on $\P^1(\C)$ supported on the unit circle, where it coincides with the Haar probability measure and, by definition, $f_\delta(1:z)=z$ if $|z|=1$, we have
$$\int_{\P^1(\C)}f_\delta d\lambdas=\int_{\C^\times}zd\lambdas(z)=0.$$

By Theorem \ref{thmFRLthesisM}, we obtain
\begin{multline}\label{fdelta2}
\left|\int_{\P^1(\C)}f_\delta d\mu_{S_\bfn}\right|=\left|\int_{\P^1(\C)}f_\delta d\mu_{S_\bfn}-\int_{\P^1(\C)}f_\delta d\lambdas \right|\\
\leq\left|\int_{\P^1(\C)}u_\delta d\mu_{S_\bfn}-\int_{\P^1(\C)}u_\delta d\lambdas\right|+\left|\int_{\P^1(\C)}v_\delta d\mu_{S_\bfn}-\int_{\P^1(\C)}v_\delta d\lambdas\right|\\
\leq(\lipsph(u_\delta)+\lipsph(v_\delta))\left(\frac{\pi}{\deg(\chi^\bfn(\bfxi))}+\left(4\h(\chi^\bfn(\bfxi))+C_0\frac{\log(\deg(\chi^\bfn(\bfxi))+1)}{\deg(\chi^\bfn(\bfxi))}\right)^{\frac12}\right)\\
\leq\frac{4\sqrt{2}(\delta^2+9)}{\delta^3}\left(\frac{\pi}{\deg(\chi^\bfn(\bfxi))}+\left(4\h(\chi^\bfn(\bfxi))+C_0\frac{\log(\deg(\chi^\bfn(\bfxi))+1)}{\deg(\chi^\bfn(\bfxi))}\right)^{\frac12}\right),
\end{multline}
where $C_0\leq15$.

Since $\h(\chi^\bfn(\bfxi))\leq\|\bfn\|_1\h(\bfxi)$, we have
\begin{multline*}
\left(4\h(\chi^\bfn(\bfxi))+C_0\frac{\log(\deg(\chi^\bfn(\bfxi))+1)}{\deg(\chi^\bfn(\bfxi))}\right)^{\frac12}\\
\leq\left(4\|\bfn\|_1\h(\bfxi)+C_0\frac{\|\bfn\|_1\log(\deg(\chi^\bfn(\bfxi))+1)}{\|\bfn\|_1\deg(\chi^\bfn(\bfxi))}\right)^{\frac12}\\
\leq\sqrt{\|\bfn\|_1}\left(4\h(\bfxi)+C_0\frac{\log(\|\bfn\|_1\deg(\chi^\bfn(\bfxi))+1)}{\|\bfn\|_1\deg(\chi^\bfn(\bfxi))}\right)^{\frac12}\\
\leq\|\bfn\|_1\left(4\h(\bfxi)+C_0\frac{\log(\|\bfn\|_1\deg(\chi^\bfn(\bfxi))+1)}{\|\bfn\|_1\deg(\chi^\bfn(\bfxi))}\right)^{\frac12}.
\end{multline*}
Hence, this together with \eqref{fdelta1} and \eqref{fdelta2} gives
\begin{align}\label{fdelta3}
\left|\int_{\P^1(\C)}f_\delta d\mu_{S_\bfn}\right|
\leq &\frac{4\sqrt{2}(\delta^2+9)}{\delta^3}\|\bfn\|_1\left(\frac{\pi}{\|\bfn\|_1\deg(\chi^\bfn(\bfxi))}\right.\\
\nonumber & \left.+\left(4\h(\bfxi)+C_0\frac{\log(\|\bfn\|_1\deg(\chi^\bfn(\bfxi))+1)}{\|\bfn\|_1\deg(\chi^\bfn(\bfxi))}\right)^{\frac12}\right)\\
\nonumber\leq & \frac{4\sqrt{2}(\delta^2+9)}{\delta^3}\|\bfn\|_1\left(4\h(\bfxi)+C\frac{\log(\|\bfn\|_1\deg(\chi^\bfn(\bfxi))+1)}{\|\bfn\|_1\deg(\chi^\bfn(\bfxi))}\right)^{\frac12},
\end{align}
with $C\leq64$.

The function $\frac{\log(x+1)}{x}$ is monotonically decreasing for $x\geq1$. We deduce that, for every $\bfn\neq\bfzero$
$$\frac{\log(\|\bfn\|_1\deg(\chi^\bfn(\bfxi))+1)}{\|\bfn\|_1\deg(\chi^\bfn(\bfxi))}\leq\frac{\log(\D(\bfxi)+1)}{\D(\bfxi)}.$$
Together with \eqref{fdelta3}, this implies that, for every $\bfn\neq\bfzero$,
$$\left|\int_{\P^1(\C)}f_\delta d\mu_{S_\bfn}\right|\leq\frac{4\sqrt{2}(\delta^2+9)}{\delta^3}\|\bfn\|_1\left(4\h(\bfxi)+C\frac{\log(\D(\bfxi)+1)}{\D(\bfxi)}\right)^{\frac12}.$$

By using this inequality and \eqref{eqmu1} we deduce that 
\begin{align*}
\left|\overline{\wh{\nu}_S(\bfn)}\right|&\leq\left|\overline{\wh{\nu}_S(\bfn)}-\frac1{\#S}\sum_{\bfalpha\in S}f_\delta(1:\chi^\bfn(\bfalpha))\right|+\left|\frac1{\#S}\sum_{\bfalpha\in S}f_\delta(1:\chi^\bfn(\bfalpha))\right|\\
&\leq \frac{-2}{\log\delta}\|\bfn\|_1\h(\bfxi)+\frac{4\sqrt{2}(\delta^2+9)}{\delta^3}\|\bfn\|_1\left(4\h(\bfxi)+C\frac{\log(\D(\bfxi)+1)}{\D(\bfxi)}\right)^{\frac12},
\end{align*}
proving the proposition.
\end{proof}

The following proposition bounds the second summand in the inequality \eqref{divideprooftwosum}.

\begin{prop}\label{propsum2}
There is a constant $C\leq64$ such that, for every $\bfxi\in(\bQ^\times)^N$ with $\h(\bfxi)\leq1$, every $0<\delta<1$ and every $F\in\F$, the following holds
\begin{multline*}
\left|\int_{(\R/\Z)^N}\wF d\nu_S-\int_{(\R/\Z)^N\times\R^N}Fd\lambdasn\right|\\
\leq\frac1{2\pi}\left(\frac{-2}{\log\delta}+\frac{4\sqrt{2}(\delta^2+9)}{\delta^3}\right)\left(4\h(\bfxi)+C\frac{\log(\D(\bfxi)+1)}{\D(\bfxi)}\right)^{\frac12}\sum_{l=1}^N\left\|\wh{\frac{\partial \wF}{\partial \theta_l}}\right\|_{\L^1},
\end{multline*}
where $S$ is the Galois orbit of $\bfxi$, $\mu_S$ the discrete probability measure associated to it, $\D(\bfxi)$ the generalized degree of $\bfxi$ and $\wF(\bftheta)=F(\bftheta,\bfzero)$.
\end{prop}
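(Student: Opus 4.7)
The plan is to combine Lemma \ref{lemmawF} with the pointwise estimate for $|\wh{\nu}_S(\bfn)|$ provided by Proposition \ref{propnus}, and then convert the weight $\|\bfn\|_1$ appearing in that bound into the Fourier transforms of the partial derivatives $\partial\wF/\partial\theta_l$ via the standard identity relating Fourier coefficients and differentiation.

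First I would note that, since $\lambdasn$ is supported on $(\R/\Z)^N\times\{\bfzero\}$ and $\wF(\bftheta)=F(\bftheta,\bfzero)$, the integral of $F$ against $\lambdasn$ coincides with that of $\wF$ against $\lambdasn$ regarded as a measure on $(\R/\Z)^N$. Because $F\in\F$, the integrability hypotheses of Lemma \ref{lemmawF} are satisfied by Theorem \ref{teorfun}, and so
$$\int_{(\R/\Z)^N}\wF\,d\nu_S-\int_{(\R/\Z)^N}\wF\,d\lambdasn=\sum_{\bfn\neq\bfzero}\wh{\wF}(\bfn)\,\overline{\wh{\nu}_S(\bfn)}.$$
Taking absolute values and inserting the bound of Proposition \ref{propnus} for each $|\wh{\nu}_S(\bfn)|$, the left hand side is at most
$$\left(\frac{-2}{\log\delta}\h(\bfxi)+\frac{4\sqrt{2}(\delta^2+9)}{\delta^3}\left(4\h(\bfxi)+C\frac{\log(\D(\bfxi)+1)}{\D(\bfxi)}\right)^{\frac12}\right)\sum_{\bfn\neq\bfzero}\|\bfn\|_1\,|\wh{\wF}(\bfn)|.$$

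Next, the Fourier-analytic identity $\wh{\partial\wF/\partial\theta_l}(\bfn)=2\pi i\,n_l\,\wh{\wF}(\bfn)$, together with $\|\bfn\|_1=\sum_{l=1}^{N}|n_l|$, yields
$$\sum_{\bfn\neq\bfzero}\|\bfn\|_1\,|\wh{\wF}(\bfn)|\;\leq\;\frac{1}{2\pi}\sum_{l=1}^{N}\left\|\wh{\frac{\partial\wF}{\partial\theta_l}}\right\|_{\L^1},$$
which is finite by Theorem \ref{teorfun}. Finally, to merge the two summands of the prefactor into a single square-root term, I would invoke the assumption $\h(\bfxi)\leq 1$: since $x^{2}\leq 4x$ whenever $0\leq x\leq 4$, this gives $\h(\bfxi)\leq (4\h(\bfxi))^{1/2}\leq(4\h(\bfxi)+C\log(\D(\bfxi)+1)/\D(\bfxi))^{1/2}$, and the two estimates combine to produce exactly the inequality in the statement.

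I do not anticipate a substantive obstacle: the arithmetic content is entirely encapsulated in Proposition \ref{propnus}, and what remains is routine Fourier-analytic bookkeeping (extracting the $\|\bfn\|_1$ factor as a sum of one-variable Fourier weights) followed by an elementary inequality to absorb the linear $\h(\bfxi)$ term into the square-root term under the hypothesis $\h(\bfxi)\leq 1$.
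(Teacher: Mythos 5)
Your argument is correct and follows essentially the same route as the paper's proof: invoke Theorem \ref{teorfun} for integrability, apply Lemma \ref{lemmawF} to express the difference as $\sum_{\bfn\neq\bfzero}\wh{\wF}(\bfn)\overline{\wh{\nu}_S(\bfn)}$, insert the bound from Proposition \ref{propnus}, convert $\|\bfn\|_1|\wh{\wF}(\bfn)|$ into $\frac{1}{2\pi}\sum_l|\wh{\partial\wF/\partial\theta_l}(\bfn)|$ via Lemma \ref{lemmatrans}, and use $\h(\bfxi)\leq1$ to dominate the linear $\h(\bfxi)$ term by the square-root term. The only cosmetic difference is that you factor out $\|\bfn\|_1$ before absorbing $\h(\bfxi)$ whereas the paper does it in the opposite order, and you write the final $\L^1$-norm identity as an inequality where it is in fact an equality (the $\bfn=\bfzero$ coefficient of $\wh{\partial\wF/\partial\theta_l}$ vanishes), but both are harmless.
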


\begin{proof}
In Appendix \ref{appendixtestfun} we prove that, given $F\in\F$, the function $\wF$ is Haar-integrable as well as its Fourier transform $\wh{\wF}$. Thus, by Lemma \ref{lemmawF} and Proposition \ref{propnus},
\begin{multline*}
\left|\int_{(\R/\Z)^N}\wF d\nu_S-\int_{(\R/\Z)^N\times\R^N}Fd\lambdasn\right|=\left|\int_{(\R/\Z)^N}\wF d\nu_S-\int_{(\R/\Z)^N}\wF d\lambdasn\right|\\
=\left|\sum_{\bfn\neq\bfzero}\wh{\wF}(\bfn)\overline{\wh{\nu}_S(\bfn)}\right|\leq\sum_{\bfn\neq\bfzero}\left|\wh{\wF}(\bfn)\right|\left|\wh{\nu}_S(\bfn)\right|\\\leq\sum_{\bfn\neq\bfzero}\left|\wh{\wF}(\bfn)\right|\left(\frac{-2}{\log\delta}\|\bfn\|_1\h(\bfxi)+\frac{4\sqrt{2}(\delta^2+9)}{\delta^3}\|\bfn\|_1\left(4\h(\bfxi)+C\frac{\log(\D(\bfxi)+1)}{\D(\bfxi)}\right)^{\frac12}\right)\\
\leq\left(\frac{-2}{\log\delta}+\frac{4\sqrt{2}(\delta^2+9)}{\delta^3}\right)\left(4\h(\bfxi)+C\frac{\log(\D(\bfxi)+1)}{\D(\bfxi)}\right)^{\frac12}\sum_{\bfn\neq\bfzero}\left|\wh{\wF}(\bfn)\right|\|\bfn\|_1,
\end{multline*}
where the last inequality is given by the fact that $\h(\bfxi)\leq1$.

By Lemma \ref{lemmatrans}, for every $l=1,\ldots,N$ we have
$$\wh{\frac{\partial \wF}{\partial \theta_l}}(\bfn)=2\pi in_l\wh{\wF}(\bfn).$$
Hence, we obtain
\begin{multline*}
\sum_{\bfn\neq\bfzero}\left|\wh{\wF}(\bfn)\right|\|\bfn\|_1=\frac1{2\pi}\sum_{l=1}^N\sum_{\bfn\neq\bfzero}\left|\wh{\wF}(\bfn)\right|\cdot|2\pi n_l|\\
=\frac1{2\pi}\sum_{l=1}^N\sum_{\bfn\in\Z^N}\left|\wh{\frac{\partial \wF}{\partial\theta_l}}(\bfn)\right|=\frac1{2\pi}\sum_{l=1}^N\left\|\wh{\frac{\partial \wF}{\partial\theta_l}}\right\|_{\L^1}.
\end{multline*}

Finally, we conclude:
\begin{multline*}
\left|\int_{(\R/\Z)^N}\wF d\pi_*\mu_S-\int_{(\R/\Z)^N\times\R^N}Fd\lambdasn\right|\\
\leq\left(\frac{-2}{\log\delta}+\frac{4\sqrt{2}(\delta^2+9)}{\delta^3}\right)\left(4\h(\bfxi)+C\frac{\log(\D(\bfxi)+1)}{\D(\bfxi)}\right)^{\frac12}\frac1{2\pi}\sum_{l=1}^N\left\|\wh{\frac{\partial \wF}{\partial\theta_l}}\right\|_{\L^1}
\end{multline*}
\end{proof}

\begin{proof}[Proof of Theorem \ref{maintheor}]
Let $F\in\F$ and set $\wF(\bftheta)=F(\bftheta,\bfzero)$. By Theorem \ref{teorfun}, the function $\wF$ and its Fourier transform $\wh{\wF}$ are Haar-integrable and thus, as shown in \eqref{divideprooftwosum}, we have
\begin{multline*}
\left|\int_{(\R/\Z)^N\times\R^N}Fd\mu_S-\int_{(\R/\Z)^N\times\R^N}Fd\lambdasn\right|\\
\leq\left|\int_{(\R/\Z)^N\times\R^N}Fd\mu_S-\int_{(\R/\Z)^N}\wF d\nu_S\right|\\
+\left|\int_{(\R/\Z)^N}\wF d\nu_S-\int_{(\R/\Z)^N\times\R^N}Fd\lambdasn\right|.
\end{multline*}

Since the function $F$ is Lipschitz with respect to the distance $\d$, by Propositions \ref{propsum1} and \ref{propsum2}, there is a constant $C\leq64$ such that
\begin{multline*}
\left|\int_{(\R/\Z)^N\times\R^N}Fd\mu_S-\int_{(\R/\Z)^N\times\R^N}Fd\lambdasn\right|\leq2\lip(F)\h(\bfxi)\\
+\frac1{2\pi}\left(\frac{-2}{\log\delta}+\frac{4\sqrt{2}(\delta^2+9)}{\delta^3}\right)\left(4\h(\bfxi)+C\frac{\log(\D(\bfxi)+1)}{\D(\bfxi)}\right)^{\frac12}\sum_{l=1}^N\left\|\wh{\frac{\partial \wF}{\partial\theta_l}}\right\|_{\L^1}.
\end{multline*}
By Theorem \ref{teorfun}, the Fourier transforms of the first order partial derivatives of $\wF$ are Haar-integrable and so this bound is finite.

We search numerically for the minimum of the function $\frac{-2}{\log\delta}+\frac{4\sqrt{2}(\delta^2+9)}{\delta^3}$, for $0<\delta<1$, and we obtain the value $94.9591$, attained at $\delta\approx0.9071$. Hence, since $\h(\bfxi)\leq1$ and $94.9591/2\pi<16$, we have
\begin{multline*}
\left|\int_{(\R/\Z)^N\times\R^N}Fd\mu_S-\int_{(\R/\Z)^N\times\R^N}Fd\lambdasn\right|\\
\leq2\lip(F)\h(\bfxi)+16\left(4\h(\bfxi)+C\frac{\log(\D(\bfxi)+1)}{\D(\bfxi)}\right)^{\frac12}\sum_{l=1}^N\left\|\wh{\frac{\partial \wF}{\partial\theta_l}}\right\|_{\L^1}\\
\leq\left(2\lip(F)+16\sum_{l=1}^N\left\|\wh{\frac{\partial \wF}{\partial\theta_l}}\right\|_{\L^1}\right)\left(4\h(\bfxi)+C\frac{\log(\D(\bfxi)+1)}{\D(\bfxi)}\right)^{\frac12}.
\end{multline*}
\end{proof}

\begin{rem} The functions in $\F$ are functions with logarithmic
singularities along toric divisors in a toric compactification of
$(\bQ^\times)^N$. The qualitative equidistribution with respect to
this set of test functions is given by the theorem of Chambert-Loir
and Thuillier \cite[Th\'eor\`eme 1.2]{ct}.
\end{rem}


\begin{appendix}

\section{The set of test functions}\label{appendixtestfun}
In this appendix, we show that the test functions in $\F$, when restricted to the unit polycircle $(S^1)^N$, are Haar-integrable as well as their Fourier transforms. We also prove the Haar-integrability of all their first order partial derivatives and their corresponding Fourier transforms.

%

Recall the definition of the test functions. The set $\F$ is given by all real-valued functions $F$ satisfying
\begin{enumerate}[(i)]
\item $F$ is Lipschitz with respect to the distance $\d$ on $(\C^\times)^N$,
\item $\wF(\bftheta)=F(\bftheta,\bfzero)$ is in $\Clf^{N+1}((\R/\Z)^N,\R)$.
\end{enumerate}

The main theorem of this section is:

\begin{teor}\label{teorfun}
For any $F\in\F$, the function $\wF(\bftheta)=F(\bftheta,\bfzero)$ satisfies the following properties:
\begin{enumerate}[(i)]
\item $\wF$ is Haar-integrable,
\item $\wh{\wF}$ is Haar-integrable,
\item for every $l=1,\ldots,N$, $\displaystyle\frac{\partial \wF}{\partial \theta_l}$ are Haar-integrable,
\item for every $l=1,\ldots,N$, $\displaystyle\wh{\frac{\partial \wF}{\partial \theta_l}}$ are Haar-integrable,
\end{enumerate}
\end{teor}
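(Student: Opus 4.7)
The plan is to establish (i) and (iii) from continuity and compactness, and to deduce (ii) and (iv) via a Sobolev-style estimate combining iterated integration by parts, Plancherel's identity, and the Cauchy-Schwarz inequality.

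For (i) and (iii): both $\wF$ and each partial derivative $\partial\wF/\partial\theta_l$ are continuous (the latter because $\wF\in\Clf^{N+1}\subset\Clf^1$). Since the torus $(\R/\Z)^N$ is compact, these functions are bounded, hence Haar-integrable.

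For (ii), I would exploit the full $\Clf^{N+1}$-regularity. Iterated integration by parts on $(\R/\Z)^N$ gives, for any multi-index $\alpha$ with $|\alpha|\le N+1$, the identity
$$
(2\pi i)^{|\alpha|}\bfn^{\alpha}\wh{\wF}(\bfn)=\wh{\partial^{\alpha}\wF}(\bfn),
$$
and each $\partial^{\alpha}\wF$ is continuous on the compact torus, hence in $\L^2$. Summing Plancherel's identity over all $|\alpha|\le N+1$, together with the elementary equivalence $\sum_{|\alpha|\le N+1}|\bfn^\alpha|^2\asymp(1+\|\bfn\|^2)^{N+1}$, yields
$$
\sum_{\bfn\in\Z^N}(1+\|\bfn\|^2)^{N+1}\,|\wh{\wF}(\bfn)|^2<\infty.
$$
The Cauchy-Schwarz inequality against the weight $(1+\|\bfn\|^2)^{-(N+1)}$, whose series over $\Z^N$ converges by comparison with $\int_{\R^N}(1+\|x\|^2)^{-(N+1)}dx$ because $2(N+1)>N$, then gives $\sum_{\bfn}|\wh{\wF}(\bfn)|<\infty$, which is (ii).

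For (iv), the same argument applied one order of differentiability lower works: since $\wF\in\Clf^{N+1}$, each $\partial\wF/\partial\theta_l$ lies in $\Clf^N$, and the analogous weighted estimate
$$
\sum_{\bfn\in\Z^N}(1+\|\bfn\|^2)^{N}\left|\wh{\frac{\partial\wF}{\partial\theta_l}}(\bfn)\right|^2<\infty
$$
together with Cauchy-Schwarz against $(1+\|\bfn\|^2)^{-N}$, summable over $\Z^N$ since $2N>N$ for $N\ge 1$, concludes. The only point requiring care is the book-keeping: the regularity order $N+1$ is calibrated precisely to the dimension $N$, so that (ii) and (iv) require exactly the convergence thresholds $2(N+1)>N$ and $2N>N$, respectively. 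Beyond this consistency check, no substantial obstacle is expected; the whole argument is a fairly standard application of Parseval-plus-Cauchy-Schwarz in the torus Fourier analysis setting recalled in Section~\ref{sec1}.
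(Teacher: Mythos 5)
Your argument is correct, and all four parts are handled soundly. Parts (i) and (iii) are proved exactly as in the paper: continuity on a compact space gives boundedness hence integrability. For (ii) and (iv) you take a related but genuinely different route. The paper partitions $\Z^N=\bigsqcup_{\bfalpha\in\{0,1\}^N}\bfW(\bfalpha)$ according to which coordinates of $\bfn$ vanish, writes $\wh{\wF}(\bfn)=\prod_{k:\alpha_k\neq0}(2\pi i n_k)^{-1}\wh{\partial^{\bfalpha}\wF}(\bfn)$ on each piece, and applies Cauchy--Schwarz with the anisotropic weight $\prod_{k:\alpha_k\neq0}(4\pi^2 n_k^2)^{-1}$; this only ever invokes mixed first-order partials $\partial^{\bfalpha}\wF$ with $\bfalpha\in\{0,1\}^N$, i.e.\ no repeated differentiation in any single variable, and saves the $(N+1)$-st order of smoothness entirely for part (iv). You instead go through the standard isotropic Sobolev embedding $H^{s}((\R/\Z)^N)\hookrightarrow\mathcal{F}\ell^1(\Z^N)$ for $s>N/2$: sum Plancherel over all $|\alpha|\le N+1$, use $\sum_{|\alpha|\le m}|\bfn^\alpha|^2\asymp(1+\|\bfn\|^2)^m$, and Cauchy--Schwarz against the weight $(1+\|\bfn\|^2)^{-(N+1)}$, which is summable since $2(N+1)>N$. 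Your version is more recognizable as a textbook Sobolev-type argument and transparently explains why the threshold $N+1$ is calibrated to the dimension, whereas the paper's decomposition is more elementary (it reduces to products of one-dimensional sums $\sum_{n\neq0}n^{-2}$) and is economical in the derivative orders it consumes for (ii). Both correctly exploit the $\Clf^{N+1}$ hypothesis and reach the same conclusion.
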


Before proving this result, let us consider a technical lemma. For every function $H:(\R/\Z)^N\longrightarrow\R$ and $\bfalpha=(\alpha_1,\ldots,\alpha_N)\in\{0,1\}^N$, we will use the notation
$$\frac{\partial^{|\bfalpha|}H}{\partial\bftheta^\bfalpha}(\bftheta)=\frac{\partial^{\alpha_1+\ldots+\alpha_N}H}{\partial\theta_1^{\alpha_1}\cdots\theta_N^{\alpha_N}}(\bftheta),$$
whenever it makes sense.

\begin{lemma}\label{lemmatrans}
Let $H:(\R/\Z)^N\longrightarrow\R$ of class $\Clf^{N+1}$ be such that
$$\frac{\partial^{|\bfalpha|}H}{\partial\bftheta^\bfalpha}\in\L^1((\R/\Z)^N)\quad\text{ and }\quad\frac{\partial^{|\bfalpha|+1}H}{\partial\bftheta^\bfalpha\theta_l}\in\L^1((\R/\Z)^N),$$
for $\bfalpha\in\{0,1\}^N$ and $l=1,\ldots,N$. Then, 
$$\wh{\frac{\partial^{|\bfalpha|}H}{\partial\bftheta^\bfalpha}}(\bfn)=\prod_{k=1}^N(2\pi in_k)^{\alpha_k}\wh{H}(\bfn)\quad\text{ and }\quad\frac{\partial^{|\bfalpha|+1}H}{\partial\bftheta^\bfalpha\theta_l}=(2\pi in_l)\prod_{k=1}^N(2\pi in_k)^{\alpha_k}\wh{H}(\bfn).$$
\end{lemma}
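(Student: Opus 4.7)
The plan is to prove the formula by induction on $|\bfalpha|$, using periodic integration by parts on the torus $(\R/\Z)^N$ one coordinate at a time. The key observation is that on the torus there are no boundary contributions: for any $\Clf^1$ function $G$ on $\R/\Z$, the integral $\int_0^1 G'(\theta)\, e^{-2\pi i n\theta}\, d\theta$ gives a boundary term $[G(\theta)e^{-2\pi i n\theta}]_0^1$ that vanishes by periodicity, yielding the factor $2\pi i n \,\wh{G}(n)$ after moving the derivative onto the exponential. The hypotheses are exactly what is needed to justify Fubini and the one-variable integration by parts in the iterated integral.

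The base case $|\bfalpha|=0$ is trivial: the formula reduces to $\wh{H}(\bfn)=\wh{H}(\bfn)$. For the inductive step, assume the formula for all multi-indices $\bfbeta\in\{0,1\}^N$ with $|\bfbeta|\le k$, and take $\bfalpha$ with $|\bfalpha|=k+1$. Choose an index $l$ with $\alpha_l=1$ and set $\bfbeta=\bfalpha-\mathbf{e}_l$. Then
\[
\wh{\frac{\partial^{|\bfalpha|}H}{\partial\bftheta^\bfalpha}}(\bfn)=\int_{(\R/\Z)^N}\frac{\partial}{\partial\theta_l}\!\left(\frac{\partial^{|\bfbeta|}H}{\partial\bftheta^{\bfbeta}}\right)\!(\bftheta)\,e^{-2\pi i\bfn\cdot\bftheta}\,d\bftheta.
\]
By Fubini, valid by the $\L^1$-hypothesis on the derivative, I can integrate $\theta_l$ first with the remaining coordinates $\bftheta'$ fixed. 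Since $H\in\Clf^{N+1}$, the function $\theta_l\mapsto(\partial^{|\bfbeta|}H/\partial\bftheta^{\bfbeta})(\bftheta',\theta_l)$ is of class $\Clf^1$ and periodic, so one-dimensional integration by parts yields
\[
\int_0^1\frac{\partial}{\partial\theta_l}\!\left(\frac{\partial^{|\bfbeta|}H}{\partial\bftheta^{\bfbeta}}\right)\!e^{-2\pi i n_l\theta_l}\,d\theta_l=2\pi i n_l\int_0^1\frac{\partial^{|\bfbeta|}H}{\partial\bftheta^{\bfbeta}}\,e^{-2\pi i n_l\theta_l}\,d\theta_l.
\]
Reassembling via Fubini and applying the inductive hypothesis,
\[
\wh{\frac{\partial^{|\bfalpha|}H}{\partial\bftheta^\bfalpha}}(\bfn)=2\pi i n_l\,\wh{\frac{\partial^{|\bfbeta|}H}{\partial\bftheta^{\bfbeta}}}(\bfn)=2\pi i n_l\prod_{k=1}^N(2\pi i n_k)^{\beta_k}\wh{H}(\bfn)=\prod_{k=1}^N(2\pi i n_k)^{\alpha_k}\wh{H}(\bfn),
\]
which closes the induction. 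The second identity of the lemma (whose left-hand side is the Fourier transform of $\partial^{|\bfalpha|+1}H/\partial\bftheta^{\bfalpha}\partial\theta_l$) is obtained by applying the first identity to the multi-index $\bfalpha+\mathbf{e}_l$, whose components still lie in $\{0,1\}$ provided $\alpha_l=0$; otherwise one iterates the same one-variable integration by parts once more, the extra differentiation producing the additional factor $2\pi i n_l$.

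There is no deep obstacle here; the only thing to be careful about is the bookkeeping needed to invoke Fubini and one-variable integration by parts at each step. The $\Clf^{N+1}$ regularity ensures every intermediate derivative that appears is $\Clf^1$ in each coordinate, so the periodic boundary terms truly vanish, and the $\L^1$-hypotheses on $\partial^{|\bfalpha|}H/\partial\bftheta^{\bfalpha}$ and $\partial^{|\bfalpha|+1}H/\partial\bftheta^{\bfalpha}\partial\theta_l$ guarantee that Fubini applies at every stage of the induction.
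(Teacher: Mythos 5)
Your proof is correct and takes essentially the same approach as the paper: the paper displays the single-step computation (integrating by parts in $\theta_1$ and appealing to Fubini and periodicity) and states that the lemma follows by applying it recursively, while you make that recursion explicit as an induction on $|\bfalpha|$. The extra bookkeeping you supply is sound and matches what the paper leaves implicit.
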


\begin{proof}
This lemma is proved applying recursively the following
\begin{align*}
\wh{\frac{\partial \wF}{\partial \theta_1}}(\bfn) &=\int_{(\R/\Z)^N}\frac{\partial \wF}{\partial \theta_1}(\bftheta)e^{-2\pi i\bfn\cdot\bftheta}d\bftheta\\
&=\int_{(\R/\Z)^{N-1}}\left(\int_{\R/\Z}\frac{\partial \wF}{\partial \theta_1}(\bftheta)e^{-2\pi in_1\theta_1}d\theta_1\right)e^{-2\pi i\sum_{j\neq 1}n_j\theta_j}d\theta_2\ldots d\theta_N\\
&=\int_{(\R/\Z)^{N-1}}\left(2\pi in_1\int_{\R/\Z}\wF(\bftheta)e^{-2\pi in_1\theta_1}d\theta_1\right)e^{-2\pi i\sum_{j\neq 1}n_j\theta_j}d\theta_2\ldots d\theta_N\\
&=(2\pi in_1)\int_{(\R/\Z)^N}\wF(\bftheta)e^{-2\pi i\bfn\cdot\bftheta}d\bftheta\\
&=(2\pi in_1)\wh{\wF}(\bfn).
\end{align*}
\end{proof}

\begin{proof}[Proof of Theorem \ref{teorfun}]
By definition, for every $F\in\F$, the function $\wF$ is of class $N+1$. This is, all its partial derivatives up to order $N+1$ are continuous and, since they are defined on a compact space, they are bounded. Hence, for every $\bfalpha\in\{0,1\}^N$ and every $l=1,\ldots,N$
\begin{equation*}
\frac{\partial^{|\bfalpha|}\wF}{\partial\bftheta^\bfalpha},\frac{\partial^{|\bfalpha|+1}\wF}{\partial\bftheta^\bfalpha\theta_l}\in(\L^1\cap\L^2)((\R/\Z)^N).
\end{equation*}
In particular, we obtain parts (i) and (iii).

Let us prove (ii), we have to see that
$$\sum_{\bfn\in\Z^N}\left|\wh{\wF}(\bfn)\right|<\infty.$$
To do so, we will divide the sum over all $\bfn\in\Z^N$ in several subsets. Let $\alpha\in\{0,1\}$ and set
$$W(\alpha)=\begin{cases}\bfzero & \text{if } \alpha=0,\\ \Z^N\setminus\{\bfzero\} &\text{if }  \alpha=1.\end{cases}$$
For $\bfalpha\in\{0,1\}^N$, set also
$$\bfW(\bfalpha)=W(\alpha_1)\times\ldots\times W(\alpha_N).$$
Hence, we have
$$\sum_{\bfn\in\Z^N}\left|\wh{\wF}(\bfn)\right|=\sum_{\bfalpha\in\{0,1\}^N}\sum_{\bfn\in\bfW(\bfalpha)}\left|\wh{\wF}(\bfn)\right|.$$
For $\bfalpha\in\{0,1\}^N$, we have
$$\sum_{\bfn\in\bfW(\bfalpha)}\left|\wh{\wF}(\bfn)\right|=\sum_{\bfn\in\bfW(\bfalpha)}\prod_{k:\alpha_k\neq0}(2\pi n_k)^{-1}\left|\wh{\frac{\partial^{|\bfalpha|}\wF}{\partial\bftheta^\bfalpha}}(\bfn)\right|.$$
We saw that $\frac{\partial^{|\bfalpha|}\wF}{\partial\bftheta^\bfalpha}\in(\L^1\cap\L^2)((\R/\Z)^N)$ and so, by Plancherel's theorem
$$\left\|\wh{\frac{\partial^{|\bfalpha|}\wF}{\partial\bftheta^\bfalpha}}\right\|_{\L^2(\Z^N)}=\left\|\frac{\partial^{|\bfalpha|}\wF}{\partial\bftheta^\bfalpha}\right\|_{\L^2((\R/\Z)^N)}.$$
Using Cauchy-Schwartz inequality, we obtain
\begin{multline*}
\left(\sum_{\bfalpha\in\{0,1\}^N}\sum_{\bfn\in\bfW(\bfalpha)}\left|\wh{\wF}(\bfn)\right|\right)^2=\left(\sum_{\bfalpha\in\{0,1\}^N}\sum_{\bfn\in\bfW(\bfalpha)}\prod_{k:\alpha_k\neq0}(2\pi n_k)^{-1}\left|\wh{\frac{\partial^{|\bfalpha|}\wF}{\partial\bftheta^\bfalpha}}(\bfn)\right|\right)^2\\
\leq\left(\sum_{\bfalpha\in\{0,1\}^N}\sum_{\bfn\in\bfW(\bfalpha)}\prod_{k:\alpha_k\neq0}\frac1{4\pi^2 n_k^2}\right)\left(\sum_{\bfalpha\in\{0,1\}^N}\sum_{\bfn\in\bfW(\bfalpha)}\left|\wh{\frac{\partial^{|\bfalpha|}\wF}{\partial\bftheta^\bfalpha}}(\bfn)\right|^2\right)<\infty.
\end{multline*}

Part (iv) of the theorem is proved by applying the same argument to the function $\frac{\partial\wF}{\partial\theta_l}$ for every $l=1,\ldots,N$.
\end{proof}

\section{Bounds for the Lipschitz constant of the function $f_\delta$}\label{appendixfdelta}
In this appendix, we give a bound for the Lipschitz constant with respect to the spherical distance of the function \mbox{$f_\delta:\P^1(\C)\rightarrow\C$} defined by
$$f_\delta(0:1)=0\ \text{ and }\ f_\delta(1:z)=\rho_\delta(|z|)\frac{z}{|z|} \text{ for any }z\in\C,$$
where $\rho_\delta:\R\rightarrow[0,1]$, with $0<\delta<1$, is given by
\begin{equation*}
\rho_\delta(r)=\begin{cases}0 & \text{if } r<\frac{\delta}{2},\\ \frac{(5\delta-4r)(\delta-2r)^2}{\delta^3} & \text{if } \frac{\delta}{2}\leq r\leq\delta, \\ 1 & \text{if } \delta<r<\frac{1}{\delta}, \\ (-2+\delta r)^2(-1+2\delta r) & \text{if } \frac{1}{\delta}\leq r\leq\frac{2}{\delta}, \\ 0 & \text{if } r>\frac{2}{\delta}.  \end{cases}
\end{equation*}

First we prove that $f_\delta\in\Clf^1(\P^1(\C),\C)$. After\-wards, we will study the Lipschitz constant of its real and imaginary parts. Let us define the usual charts in $\P^1(\C)$,
$$U_0:=\{(z_0:z_1)\in\P^1(\C):z_0\neq0\}\quad\text{ and }\quad U_1:=\{(z_0:z_1)\in\P^1(\C):z_1\neq0\}.$$
It is easy to see that the function $f_\delta$ is compactly supported on $U_0\cap U_1$. In fact, we have that
$$\supp(f_\delta)=\left\{(1:z):\frac{\delta}{2}\leq|z|\leq\frac2\delta\right\}.$$
For this reason, to prove that $f_\delta$ is in $\Clf^1(\P^1(\C),\C)$, it is enough to prove that the function $\rho_\delta(|z|)\frac{z}{|z|}$ is of class $\Clf^1$ in a neighborhood of the set $\left\{z:\frac{\delta}{2}\leq|z|\leq\frac2\delta\right\}$. 

The piecewise-defined function $\rho_\delta$ is continuous, as well as its derivative, which is given by
\begin{equation*}
\rho_\delta'(r)=\begin{cases} -\frac{24}{\delta^3}(\delta-2r)(\delta-r) &\text{if } \frac{\delta}{2}\leq r\leq\delta, \\ 6\delta(-2+\delta r)(-1+\delta r) &\text{if } \frac{1}{\delta}\leq r\leq \frac{2}{\delta}, \\ 0 & \text{otherwise.}
\end{cases}\end{equation*}
Hence, since $|z|$ and $z/|z|$ are smooth on $\C^\times$, we conclude that $\rho_\delta(|z|)\frac{z}{|z|}$ is of class $\Clf^1$.

\begin{lemma}\label{lemmafdelta}
Let $f_\delta$ be defined as above, and set $f_\delta=u_\delta+iv_\delta$. Then, 
$$\lipsph(u_\delta),\lipsph(v_\delta)\leq2\sqrt{2}\frac{\delta^2+9}{\delta^3}.$$\
\end{lemma}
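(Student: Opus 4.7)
The plan is to use the fact that for a $\Clf^1$-function on the Riemann sphere the Lipschitz constant with respect to $\dsph$ equals the supremum of the norm of its gradient with respect to the spherical metric, and then to estimate the spherical gradient of $u_\delta$ and $v_\delta$ directly.

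Since $\supp(f_\delta)$ is the compact annulus $\{(1:z) : \delta/2 \le |z| \le 2/\delta\} \subset U_0$, it is enough to work in the affine chart $z\in\C$. Under the stereographic identification $\P^1(\C)\cong S^2$, the spherical metric pulls back to the conformal metric $ds^2_{\rm sph} = 4|dz|^2/(1+|z|^2)^2$, which gives the pointwise identity
$$|\nabla_{\rm sph}\, g|(z) = \frac{1+|z|^2}{2}\,|\nabla_{e}\, g|(z)$$
for any $\Clf^1$-function $g$, where $\nabla_e$ is the Euclidean gradient on $\R^2$. Writing $z = re^{i\theta}$, so that $u_\delta = \rho_\delta(r)\cos\theta$ and $v_\delta = \rho_\delta(r)\sin\theta$, a direct polar computation gives
$$|\nabla_e u_\delta|^2,\;|\nabla_e v_\delta|^2 \;\le\; \rho_\delta'(r)^2 + \frac{\rho_\delta(r)^2}{r^2},$$
reducing the task to showing that the weighted quantity $\frac{1+r^2}{2}\sqrt{\rho_\delta'(r)^2 + \rho_\delta(r)^2/r^2}$ is bounded by $2\sqrt{2}(\delta^2+9)/\delta^3$ on the support of $f_\delta$.

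This is then verified piecewise using the explicit formulas for $\rho_\delta$ and $\rho_\delta'$. On $[\delta/2,\delta]$, crude estimates $|\delta-2r|\le\delta$ and $|\delta-r|\le\delta/2$ yield $|\rho_\delta'|\le 12/\delta$, while $\rho_\delta\le 1$ and $r\ge\delta/2$ give $\rho_\delta/r\le 2/\delta$, and $(1+r^2)/2\le 1$. On $[\delta,1/\delta]$, $\rho_\delta'=0$ and $\rho_\delta/r=1/r$, so the weighted expression is $(1+r^2)/(2r)\le(1+\delta^2)/(2\delta)$. On $[1/\delta,2/\delta]$, the bounds $|\delta r-2|,\,|\delta r-1|\le 1$ give $|\rho_\delta'|\le 6\delta$ and $\rho_\delta/r\le\delta$, while $(1+r^2)/2\le(\delta^2+4)/(2\delta^2)$. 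Applying the elementary inequality $\sqrt{a^2+b^2}\le\sqrt{2}\max(|a|,|b|)$ on each piece and checking that the resulting expression is dominated by $2\sqrt{2}(\delta^2+9)/\delta^3$ completes the argument for both $u_\delta$ and $v_\delta$.

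The main obstacle is not conceptual but combinatorial: the target bound $2\sqrt{2}(\delta^2+9)/\delta^3$ is very far from sharp (one can check that the true spherical Lipschitz constant is only of order $1/\delta$ as $\delta\to 0$), so the actual difficulty lies entirely in arranging the crude piecewise estimates together with the conformal factor $(1+r^2)/2$ so that each piece is individually dominated by the specific expression stated in the lemma.
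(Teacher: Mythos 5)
Your proof is correct, and it takes a genuinely different route from the paper's. The paper works with the chordal distance $\dch$ in the affine chart, applies the Euclidean mean value theorem along line segments in $\R^2$, and then transfers to $\dsph$ via the comparison $\dch\leq\dsph$ of Lemma \ref{lemmadist}; since the conformal factor $\tfrac12\sqrt{1+|z_0|^2}\sqrt{1+|z_1|^2}$ appearing in $\dch$ depends on both endpoints, the argument there has to split into four cases according to which disks the two points lie in. You instead bound the Riemannian gradient of $u_\delta$, $v_\delta$ with respect to the spherical metric pointwise, via the conformal identity $|\nabla_{\rm sph}g|=\tfrac{1+|z|^2}{2}|\nabla_e g|$, and invoke the standard fact that on a complete connected Riemannian manifold the Lipschitz constant of a $\Clf^1$ function equals $\sup|\nabla_{\rm sph}g|$. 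This is cleaner and strictly more efficient: coupling the Euclidean gradient bound with the conformal factor at the same radius $r$ eliminates the case analysis and, as you note, gives the sharp order $1/\delta$ rather than the $1/\delta^3$ the paper obtains by multiplying the supremum of the Euclidean gradient (attained near $r\approx\delta/2$) by the supremum of the conformal weight (attained near $r\approx 2/\delta$). Your piecewise verification on $[\delta/2,\delta]$, $[\delta,1/\delta]$, $[1/\delta,2/\delta]$ checks out and comfortably dominates the stated bound $2\sqrt{2}(\delta^2+9)/\delta^3$; what the paper's approach buys in exchange is only that it stays entirely within elementary multivariate calculus, with no appeal to Riemannian geodesic distance.
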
 

The spherical distance $\dsph$ on $\P^1(\C)$ can be computed as follows
$$\dsph(p,p'):=2\arccos\left(\frac{|z_0\overline{z_0'}+z_1\overline{z_1'}|}{\sqrt{|z_0|^2+|z_1|^2}\sqrt{|z_0'|^2+|z_1'|^2}}\right),$$
for $p=(z_0:z_1)$ and $p'=(z_0':z_1')$ in $\P^1(\C)$.

To simplify the computations, we will work with an equivalent distance, the {\em chordal distance} $\dch$ on $\P^1(\C)$, which is given by the length of the chord joining two points of $S^2$. For $p=(z_0:z_1)$ and $p'=(z_0':z_1')$ in $\P^1(\C)$, we have
$$\dch(p,p'):=\frac{2|z_0z_1'-z_1z_0'|}{\sqrt{|z_0|^2+|z_1|^2}\sqrt{|z_0'|^2+|z_1'|^2}}.$$

These distances can be compared as follow:
\begin{lemma}\label{lemmadist}
For every $p,p'\in\P^1(\C)$,
$$\frac2\pi\dsph(p,p')\leq\dch(p,p')\leq\dsph(p,p').$$
\end{lemma}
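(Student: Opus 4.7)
The plan is to identify $\P^1(\C)$ with the unit sphere $S^2 \subset \R^3$ via the homeomorphism $\rho$ of the excerpt and reduce both inequalities to classical bounds on $\sin$. Given $p, p' \in \P^1(\C)$, let $P, P' \in S^2$ be their preimages under $\rho$, and let $\alpha \in [0, \pi]$ denote the central angle $\angle POP'$. By the geometric definitions of the two distances---$\dsph$ as arc length on the unit sphere and $\dch$ as the length of the chord in $\R^3$---one has
\[
\dsph(p,p') = \alpha \qquad \text{and} \qquad \dch(p,p') = |P - P'| = 2\sin(\alpha/2).
\]
The explicit algebraic formulas given in the excerpt for $\dsph$ and $\dch$ in terms of homogeneous coordinates $(z_0:z_1)$, $(z_0':z_1')$ are consistent with this identification via the identity
\[
|z_0\overline{z_0'} + z_1\overline{z_1'}|^2 + |z_0 z_1' - z_1 z_0'|^2 = (|z_0|^2+|z_1|^2)(|z_0'|^2+|z_1'|^2)
\]
combined with the half-angle formula $\cos\alpha = 2\cos^2(\alpha/2) - 1$; this is a routine computation that I would only sketch.

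With this reduction in hand, both inequalities collapse into the single trigonometric double inequality
\[
\tfrac{2\alpha}{\pi} \;\leq\; 2\sin(\alpha/2) \;\leq\; \alpha, \qquad \alpha \in [0, \pi].
\]
The right-hand inequality is the classical $\sin x \leq x$ for $x \geq 0$, applied at $x = \alpha/2$. The left-hand inequality is Jordan's inequality $\sin x \geq 2x/\pi$ on $[0, \pi/2]$, applied at $x = \alpha/2 \in [0, \pi/2]$; it is a direct consequence of the concavity of $\sin$ on $[0, \pi/2]$ (equivalently, of the monotonicity of $x \mapsto (\sin x)/x$, whose values on this interval range between $2/\pi$ and $1$).

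There is no essential obstacle in this argument beyond the initial geometric translation. The only mildly delicate point is making sure that the explicit formulas used in the excerpt really measure arc length and chord length on the \emph{unit} sphere (with the specific stereographic projection normalization fixed in the paragraph preceding the lemma, where the equator of $S^2$ is identified with the unit circle of $\C$); once this normalization is in place, the rest of the proof is a line of trigonometry.
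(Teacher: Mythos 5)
Your proof is correct and follows essentially the same route as the paper's: both identify $\dch(p,p') = 2\sin(\dsph(p,p')/2)$ via the sphere picture and then reduce to the elementary double inequality $\tfrac{2}{\pi} x \leq \sin x \leq x$ on $[0,\pi/2]$. (Your write-up is in fact a bit more careful on two small points: the paper invokes only $\dsph(p,p') \leq \pi$ for the upper bound, leaving $\sin x \leq x$ implicit, and it appeals to the ``convexity'' of $2\sin(x/2)$ where concavity is what is actually needed and what you correctly cite.)
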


\begin{proof}
We work on the sphere using the stereographic projection. Since the chordal distance $\dch$ between two points in the sphere is the length of the chord joining them and the spherical distance $\dsph$ is the angle between the vectors both points define, we have
$$\dch(p,p')=2\sin\left(\frac{\dsph(p,p')}{2}\right),\text{ for every }p,p'\in\P^1(\C).$$
For any pair of points, we have $\dsph(p,p')\leq\pi$ so we deduce
$$\dch(p,p')\leq\dsph(p,p').$$

Now, let $\beta>0$ be such that $\beta\dsph(p,p')\leq\dch(p,p')$ for all $p,p'\in\P^1(\C)$. This is equivalent to $\beta x\leq2\sin\left(\frac{x}{2}\right)$ for every $0\leq x\leq\pi$. By the convexity of the function $2\sin\left(\frac{x}{2}\right)$, we deduce that the optimal value is $\beta=\frac{2}{\pi}$.
\end{proof}

\begin{proof}[Proof of Lemma \ref{lemmafdelta}]
Let us compute now a bound for the Lipschitz constants, with respect to the spherical distance, of the $u_\delta$ and $v_\delta$. To do so, we choose coordinates $(x,y)$ in $\R^2\cong\C$. Let
$$\tilde{u}_\delta(x,y):=u_\delta(1:x+iy)=\frac{\rho_\delta(\sqrt{x^2+y^2})}{\sqrt{x^2+y^2}}x,$$
$$\tilde{v}_\delta(x,y):=v_\delta(1:x+iy)=\frac{\rho_\delta(\sqrt{x^2+y^2})}{\sqrt{x^2+y^2}}y.$$
Since the computations are symmetric for both the real and imaginary parts of $f_\delta$, it is enough to study the Lipschitz constant of one of them. To simplify these computations, we will study the Lipschitz constant with respect to the chordal distance in the Riemann sphere and conclude by applying the comparison between the chordal and spherical distances.

First of all, recall that the chordal distance restricted to the open subset $U_0\subset\P^1(\C)$ is given by
$$\dch((1:x_0+iy_0),(1:x_1+iy_1))=\frac{2\|(x_0,y_0)-(x_1,y_1)\|}{\sqrt{1+m(x_0,y_0)^2}\sqrt{1+m(x_1,y_1)^2}},$$
where $\|\cdot\|$ denotes the Euclidean metric on $\R^2$ and $m(x,y)=\sqrt{x^2+y^2}$.
Now, since the function $u_\delta$ is supported on $U_0$, we have
\begin{multline*}
\sup_{z_0,z_1\in\C}\frac{|u_\delta(1:z_0)-u_\delta(1:z_1)|}{\dch((1:z_0),(1:z_1))}\\
=\sup_{(x_0,y_0),(x_1,y_1)\in\R^2}\frac{|\tilde{u}_\delta(x_0,y_0)-\tilde{u}_\delta(x_1,y_1)|}{\|(x_0,y_0)-(x_1,y_1)\|}\frac{\sqrt{1+m(x_0,y_0)^2}\sqrt{1+m(x_1,y_1)^2}}{2}.
\end{multline*}

We consider different cases.

\begin{enumerate}[1.]
\item If $(x_0,y_0),(x_1,y_1)\notin D(0,\frac2\delta)$, we trivially obtain
$$\frac{|\tilde{u}_\delta(x_0,y_0)-\tilde{u}_\delta(x_1,y_1)|}{\dch((1:x_0+iy_0),(1:x_1+iy_1))}=0.$$
\item Suppose $(x_0,y_0),(x_1,y_1)\in D(0,\frac2\delta)$. For $t\in[0,1]$, consider the function $g(t)=\tilde{u}_\delta((1-t)(x_0,y_0)+t(x_1,y_1))$. By the mean value theorem, there is some $c\in(0,1)$ such that $g(1)-g(0)=g'(c)$. Applying the chain rule, we obtain
$$\tilde{u}_\delta(x_1,y_1)-\tilde{u}_\delta(x_0,y_0)=\nabla\tilde{u}_\delta((1-c)(x_0,y_0)+c(x_1,y_1))\cdot(x_1-x_0,y_1-y_0).$$
Hence, we deduce
\begin{equation}\label{mvthmr2}
\frac{|\tilde{u}_\delta(x_0,y_0)-\tilde{u}_\delta(x_1,y_1)|}{\|(x_0,y_0)-(x_1,y_1)\|}\leq\sup_{(x,y)\in D(0,\frac2\delta)}\|\nabla\tilde{u}_\delta(x,y)\|.
\end{equation}
Let us study the gradient of $\tilde{u}_\delta$. For every $(x,y)\in\R^2$ we have
\begin{align*}
\frac{\partial\tilde{u}_\delta}{\partial x}(x,y)& =\left(\frac{x}{m(x,y)}\right)^2\rho_\delta'(m(x,y))+\left(\frac{y}{m(x,y)}\right)^2\frac{\rho_\delta(m(x,y))}{m(x,y)},\\
\frac{\partial\tilde{u}_\delta}{\partial y}(x,y)&=\frac{xy}{m(x,y)^2}\left(\rho_\delta'(m(x,y))-\frac{\rho_\delta(m(x,y))}{m(x,y)}\right).
\end{align*}
Without loss of generality, we restrict ourselves to the situation where $(x,y)$ verifies $\frac\delta2\leq m(x,y)\leq\frac2\delta$, since otherwise both partial derivatives would vanish. It can be easily shown that $|\rho'_\delta(r)|\leq\frac3\delta$ for every $r\geq0$. This, together with the fact that $0\leq\rho_\delta\leq1$, $x\leq m(x,y)$, $y\leq m(x,y)$ and $m(x,y)\geq\frac\delta2$, leads to
$$\left|\frac{\partial\tilde{u}_\delta}{\partial x}(x,y)\right|,\left|\frac{\partial\tilde{u}_\delta}{\partial y}(x,y)\right|\leq\frac4\delta.$$
We then conclude that, for any $(x,y)\in\R^2$,
$$\|\nabla\tilde{u}_\delta(x,y)\|\leq\frac{4\sqrt{2}}{\delta}.$$

On the other hand, given $(x_0,y_0),(x_1,y_1)\in D(0,\frac2\delta)$ we have that
$$\frac{\sqrt{1+m(x_0,y_0)^2}\sqrt{1+m(x_1,y_1)^2}}{2}\leq\frac{\delta^2+4}{2\delta^2}.$$
Therefore, we obtain
\begin{equation*}
\frac{|\tilde{u}_\delta(x_0,y_0)-\tilde{u}_\delta(x_1,y_1)|}{\dch((1:x_0+iy_0),(1:x_1+iy_1))}\leq
2\sqrt{2}\frac{\delta^2+4}{\delta^3}.
\end{equation*}

\item Suppose now that $(x_0,y_0)\in D(0,\frac2\delta)$ and $(x_1,y_1)\in D(0,\frac3\delta)\setminus D(0,\frac2\delta)$. As we did in the previous case, we can deduce that
$$\frac{|\tilde{u}_\delta(x_0,y_0)-\tilde{u}_\delta(x_1,y_1)|}{\|(x_0,y_0)-(x_1,y_1)\|}\leq\frac{4\sqrt{2}}{\delta}$$
and
$$\frac{\sqrt{1+m(x_0,y_0)^2}\sqrt{1+m(x_1,y_1)^2}}{2}\leq\frac{\delta^2+9}{2\delta^2}.$$
Hence, we obtain
\begin{equation*}
\frac{|\tilde{u}_\delta(x_0,y_0)-\tilde{u}_\delta(x_1,y_1)|}{\dch((1:x_0+iy_0),(1:x_1+iy_1))}\leq
2\sqrt{2}\frac{\delta^2+9}{\delta^3}.
\end{equation*}

\item Finally suppose that $(x_0,y_0)\in D(0,\frac2\delta)$ and $(x_1,y_1)\notin D(0,\frac3\delta)$. In this situation, we have $\tilde{u}_\delta(x_1,y_1)=0$ and
$$|\tilde{u}_\delta(x_1,y_1)|=|\rho_\delta(m(x_0,y_0))|\frac{|x_0|}{m(x_0,y_0)}\leq1.$$
Since 
$$\dch((1:x_0+iy_0),(1:x_1+iy_1))\geq\dch((1:\frac2\delta),(1:\frac3\delta))=\frac{2\delta}{\sqrt{(\delta^2+9)(\delta^2+4)}},$$
we conclude
\begin{equation*}
\frac{|\tilde{u}_\delta(x_0,y_0)-\tilde{u}_\delta(x_1,y_1)|}{\dch((1:x_0+iy_0),(1:x_1+iy_1))}\leq
\frac{2\delta}{\sqrt{(\delta^2+9)(\delta^2+4)}}{2\delta}.
\end{equation*}
\end{enumerate}

Having studied all these cases, we deduce that
$$\sup_{(x_0,y_0),(x_1,y_1)\in\R^2}\frac{|\tilde{u}_\delta(x_0,y_0)-\tilde{u}_\delta(x_1,y_1)|}{\dch((1:x_0+iy_0),(1:x_1+iy_1))}\leq2\sqrt{2}\frac{\delta^2+9}{\delta^3}.$$

As we mentioned above, we were looking for a bound of the Lipschitz constant of $u_\delta$ with respect to the spherical distance. By Lemma \ref{lemmadist}, we know that $\dsph(p,p')\geq\dch(p,p')$ for any pair of points $p,p'\in\P^1(\C)$ and we obtain
\begin{multline*}
\lipsph(u_\delta)=\sup_{p,p'\in\P^1(\C)}\frac{|u_\delta(p)-u_\delta(p')|}{\dsph(p,p')}\\
\leq\sup_{(x_0,y_0),(x_1,y_1)\in\R^2}\frac{|\tilde{u}_\delta(x_0,y_0)-\tilde{u}_\delta(x_1,y_1)|}{\dch((1:x_0+iy_0),(1:x_1+iy_1))}\leq2\sqrt{2}\frac{\delta^2+9}{\delta^3}.
\end{multline*}

Analogously, we deduce that $\lipsph(v_\delta)\leq2\sqrt{2}\frac{\delta^2+9}{\delta^3}$.
\end{proof}

\end{appendix}

\providecommand{\bysame}{\leavevmode\hbox to3em{\hrulefill}\thinspace}
\providecommand{\MR}{\relax\ifhmode\unskip\space\fi MR }
\providecommand{\MRhref}[2]{%
  \href{http://www.ams.org/mathscinet-getitem?mr=#1}{#2}
}
\providecommand{\href}[2]{#2}

\end{document}